\documentclass[preprint,11pt]{elsarticle}
\usepackage{amssymb}
\usepackage{amsthm}
\usepackage{graphics}
\usepackage{epsfig}
\usepackage{amssymb}
\usepackage{graphicx}
\usepackage{subfigure}
\usepackage{color}
\usepackage{comment}
\usepackage{tikz}
\usepackage{tikz,pgfplots}
\pgfplotsset{compat=1.12,axis lines=center}
\usepgfplotslibrary{fillbetween}
\usetikzlibrary{patterns}
\usepackage{hyperref}
\usepackage{a4wide}
\usepackage{amsmath}
\usepackage{amsfonts}
\usepackage{enumerate}
\usepackage{multicol}

\newcommand{\cyc}{\operatorname{Cyc}}	
\newcommand{\aut}{\operatorname{Aut}}	

\newcommand{\PSL}{\operatorname{PSL}}
\newcommand{\PGL}{\operatorname{PGL}}
\newcommand{\PSU}{\operatorname{PSU}}
\newcommand{\SL}{\operatorname{SL}}
\newcommand{\GL}{\operatorname{GL}}
\newcommand{\Sz}{\operatorname{Sz}}
\newcommand{\soc}{\operatorname{soc}}
\newcommand{\CG}[2]{C_{#1}(#2)}

\makeatletter
\def\ps@pprintTitle{%
	\let\@oddhead\@empty
	\let\@evenhead\@empty
	\let\@oddfoot\@empty
	\let\@evenfoot\@oddfoot
}
\makeatother

\hypersetup{
	colorlinks=true,
	linkcolor=blue,
	filecolor=dviolet,      
	urlcolor=blue,
} 

\newtheorem*{theoremaux}{Theorem \theoremauxnum}
\gdef\theoremauxnum{1}

		\newtheorem{theorem}{\bf Theorem}[section]
	\newtheorem{lemma}[theorem]{\bf Lemma}
	
	\newtheorem{proposition}[theorem]{\bf Proposition}
	\newtheorem{corollary}[theorem]{\bf Corollary}

\journal{~}

\begin{document}

	\begin{frontmatter}
		
		
		
		\title{On the finite group whose proper enhanced power graph is claw-free}

		
		\author{Sudip Bera}
		\ead{sudip\_bera@dau.ac.in} 
		\address{Faculty of Mathematics,\\ 
		Dhirubhai Ambani University, Gandhinagar \\ India} 
		\author{Andrea Lucchini}
		\ead{lucchini@math.unipd.it}
		%
		\address{Università di Padova, Dipartimento di Matematica “Tullio Levi-Civita”, Via Trieste 63, 35121 Padova,\\ Italy.}
		
	
		\begin{abstract}
\noindent Let $G$ be a finite group. The \emph{enhanced power graph} of $G$,
denoted by $\mathcal{E}(G)$, is the graph with vertex set $G$ in which two 
vertices $u$ and $v$ are adjacent if and only if there exists an element 
$w \in G$ such that both $u$ and $v$ belong to $\langle w \rangle$. The 
\emph{proper enhanced power graph} of $G$, denoted by $\mathcal{E}^{**}(G)$, 
is the subgraph of $\mathcal{E}(G)$ induced by the non-dominating vertices.

The main objective of this paper is to investigate finite groups whose proper 
enhanced power graph is claw-free, that is, contains no induced subgraph 
isomorphic to the complete bipartite graph $K_{1,3}$. We first prove that 
$\mathcal{E}(G)$ is claw-free if and only if $G$ is cyclic. The set of 
dominating vertices of $\mathcal{E}(G)$ forms a cyclic subgroup of the center 
of $G$, namely the \emph{cyclicizer} $\cyc(G)$ of $G$. This allows us to give 
a precise description of the structure of $G/\cyc(G)$ when $\mathcal{E}^{**}(G)$ 
is claw-free. If $G$ is solvable but not nilpotent, then $G$ is metacyclic, or $G/\cyc(G)$ is either 
a Frobenius group or a $2$-Frobenius group. If $G$ is non-solvable, then 
$G/\cyc(G)$ is isomorphic to $\PSL(2,q)$ or $\PGL(2,q),$ and this allows us to give a complete classification of the non-solvable
groups whose proper enhanced power graph is claw-free.
\end{abstract}

		\begin{keyword}
			 Proper enhanced power graphs\sep Claw-free graphs\sep Solvable groups \sep Non-solvable groups \sep Simple groups
			\medskip  
			
			
		\end{keyword}
		
	\end{frontmatter}

    \section{Introduction}
A graph is called a \emph{claw} if it is isomorphic to the complete bipartite graph $K_{1,3}$. A graph is said to be \emph{claw-free} if it contains no induced subgraph isomorphic to a claw.
Claw-free graphs constitute one of the most important and extensively studied classes of graphs in graph theory. This family encompasses several well-known graph classes, such as line graphs, complements of triangle-free graphs, and proper interval graphs \cite{clawfreesurvey}. It also includes the graphs of several polyhedra and polytopes: the graph of the tetrahedron and, more generally, of any simplex; the graph of the octahedron and, more generally, of any cross-polytope; the graph of the regular icosahedron; and the graph of the 16-cell \cite{clawfree2005}.
 The systematic study of claw-free graphs originated from Beineke's celebrated characterization of line graphs, which highlighted the structural significance of forbidding the claw as an induced subgraph \cite{Beineke1970}. Subsequently, during the late 1970s and early 1980s, claw-free graphs attracted considerable attention due to their rich structural and algorithmic properties \cite{Edmonds1965,Minty1980}.

Algebraic graph theory provides a powerful framework for studying algebraic 
structures through graph-theoretic techniques. By associating graphs with 
algebraic objects such as groups, semigroups, rings, and other algebraic 
systems, researchers have gained valuable insights into both the structural 
properties of graphs and the underlying algebraic objects. Such graph 
representations facilitate the investigation of graph invariants, the 
characterization of algebraic structures via graph isomorphisms, and the 
discovery of deep connections between graph theory and abstract algebra. 
Among the graphs associated with groups is the \emph{commuting graph}, 
originally introduced by Erd\H{o}s, whose vertices are the elements of a 
group, with two distinct vertices adjacent if and only if the corresponding 
elements commute \cite{braurflower,MorganParker2013}. Several graphs 
associated with a finite group encode its generation properties. One such 
example is the \emph{generating graph}, in which two vertices $x$ and $y$ 
are adjacent if and only if $G=\langle x,y\rangle$~\cite{LucchiniMaroti2009}. 
Observe that the generating graph is empty whenever $G$ is not $2$-generated. 
To extend the study of generation to arbitrary finite groups, the 
\emph{independence graph} was introduced. Its vertices are the elements of 
$G$, and two vertices $x$ and $y$ are adjacent precisely when $\{x,y\}$ is 
contained in a minimal generating set of $G$~\cite{Lucchini2020}. Other 
graphs encoding the group's internal structure include the \emph{enhanced 
	power graph} \cite{firstenhcedpwrstrctreaacns1, enhancedpwrgrapbb3} and the 
\emph{power graph} \cite{Bera-Line-graph-comm-alg, jgt-cameron, 
	pwrgraphoffntgrpgc1}. Readers interested in a broader overview of this topic 
are referred to the survey paper by Cameron~\cite{PJC-Graphs-def-grp}.

The power graph of a group was first introduced by Kelarev and Quinn in 2002 
as a directed graph, where two elements are related whenever one is a power 
of the other. Later, in 2009, Chakrabarty et al.\ defined the undirected 
power graph in \cite{undpwrgraphofsemgmainsgc1}. The undirected power graph 
(or simply power graph) of a group $G$, denoted by $\mathcal{P}(G)$, is the 
simple undirected graph whose vertex set is $G$, and in which two distinct 
vertices $u$ and $v$ are adjacent if and only if one is a positive power of 
the other. The reduced power graph of $G$, denoted by $\mathcal{P}^{*}(G)$, 
is the induced subgraph of $\mathcal{P}(G)$ obtained by deleting the identity 
element of $G$. Since its introduction, the power graph has become an active 
area of research owing to its close relationship with both group-theoretic 
and graph-theoretic properties. In 2017, the authors of 
\cite{firstenhcedpwrstrctreaacns1} introduced the enhanced power graph, which 
lies between the power graph and the commuting graph, as a means of measuring 
how close the former is to the latter.
This graph preserves more structural information about the underlying group 
while remaining mathematically tractable, making it an important object of 
study in contemporary algebraic graph theory. Much work has been done 
recently on various properties of the enhanced power graph of finite groups. 
In \cite{firstenhcedpwrstrctreaacns1}, the finite groups for which any 
arbitrary pair of these three graphs~--- power, commuting, and enhanced~--- 
coincide are characterized. In \cite{Zahirovienhnacedpwrgraph}, it was proved 
that finite groups with isomorphic enhanced power graphs also have isomorphic 
directed power graphs. Bera et al.\ in \cite{enhancedpwrgrapbb3} studied the 
completeness, dominatability, and several other properties of the enhanced 
power graph. Ma and She in \cite{ma-she} derived the metric dimension, while 
Hamzeh et al.\ in \cite{Hamzeh-ashrafi} determined the automorphism groups 
of enhanced power graphs of finite groups.

Motivated by the significance of both claw-free graphs and enhanced power graphs, the present work investigates the claw-free property of enhanced power graphs. It explores the structural conditions on groups under which their enhanced power graphs are claw-free. In \cite{claw-free-pwr-graph}, the authors classified all the groups whose reduced power graph is claw-free. 
 In this paper, our topic of interest is the claw-free structure of the proper enhanced power graph. 

\section{Main results}
\label{sec:main_results}
The \emph{enhanced power graph} of $G,$
denoted by $\mathcal{E}(G),$ is the graph with vertex set $G,$ in which two vertices $u$ and $v$ are joined if and only
if there exists an element $w \in G$ such that both $u \in \langle w \rangle $ and $v \in  \langle w \rangle,$ or, equivalently, if and only if $\langle u, v\rangle$ is cyclic.
The \emph{proper enhanced power graph} of $G,$ denoted by $\mathcal{E}^{**}(G),$ is the graph obtained by deleting all the dominating vertices from the enhanced power graph $\mathcal{E}(G).$     
The main objective of this paper is to investigate finite groups whose proper enhanced power graph is claw-free, but we first classify all finite groups for which the enhanced power graphs are claw-free. 
\begin{theorem}\label{Thm:enh-claw-free-imply-uniuq-subgroup}
Let \(G\) be a finite group. Then the enhanced power graph \(\mathcal{E}(G)\) is claw-free if and only if $G$ is cyclic.
\end{theorem}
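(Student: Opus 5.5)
The easy direction comes first. If $G$ is cyclic, then any two elements $u,v$ generate a subgroup of the cyclic group $G$, hence a cyclic subgroup. So $\mathcal{E}(G)$ is complete, and a complete graph has no induced $K_{1,3}$, since the three leaves would have to be pairwise non-adjacent. This direction needs nothing beyond the definition.

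For the converse we argue by contraposition: assuming $G$ is not cyclic, we exhibit an induced claw. The natural centre is the identity $e$, which is adjacent to every other vertex because $\langle e,x\rangle=\langle x\rangle$ is cyclic. It therefore suffices to find three non-identity elements $a,b,c$ that are pairwise non-adjacent, i.e.\ such that $\langle a,b\rangle$, $\langle a,c\rangle$ and $\langle b,c\rangle$ are all non-cyclic. Then $\{e;a,b,c\}$ induces $K_{1,3}$.

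To produce $a,b,c$, we use maximal cyclic subgroups. Since $G$ is finite and not cyclic, $G$ is the union of its maximal cyclic subgroups and there are at least two of them. Two elements $x,y$ are adjacent in $\mathcal{E}(G)$ exactly when $\langle x,y\rangle$ is cyclic, i.e.\ when they lie in a common maximal cyclic subgroup. So we pick generators of distinct maximal cyclic subgroups: if $C_1=\langle a\rangle$ and $C_2=\langle b\rangle$ are distinct maximal cyclic subgroups, then $a$ and $b$ are non-adjacent. Indeed, if $\langle a,b\rangle$ were cyclic, it would contain $C_1$ and $C_2$, and maximality would force $C_1=\langle a,b\rangle=C_2$. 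Thus generators of three distinct maximal cyclic subgroups are pairwise non-adjacent, and the converse reduces to showing that a non-cyclic finite group has at least three maximal cyclic subgroups.

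This last step is the main obstacle, and it has a clean argument. Suppose $G$ had exactly two maximal cyclic subgroups $C_1,C_2$. Then $G=C_1\cup C_2$. But a group is never the union of two proper subgroups: if $x\in C_1\setminus C_2$ and $y\in C_2\setminus C_1$, then $xy$ lies in neither. Here $C_1,C_2$ are proper because $G$ is non-cyclic, and each is not contained in the other by maximality. Hence there are at least three maximal cyclic subgroups, and taking generators $a,b,c$ of three of them together with $e$ gives the claw. This completes the plan.
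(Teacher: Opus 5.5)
Your proof is correct, and it takes a different, more elementary route than the paper.

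The paper argues by minimal counterexample. Since $\mathcal{E}(H)$ is an induced subgraph of $\mathcal{E}(G)$ for every $H\le G$, a minimal counterexample must be a minimal non-cyclic group. The paper then invokes the Miller--Moreno classification ($C_p\times C_p$, $Q_8$, $C_p\rtimes C_{q^m}$) and exhibits a claw in each case by hand.

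You work directly in $G$ instead. You observe that generators of distinct maximal cyclic subgroups are non-adjacent. You then note that a non-cyclic group has at least three maximal cyclic subgroups, since no group is the union of two proper subgroups. The identity then serves as the centre of the claw. This needs no classification and is fully self-contained. The only remaining checks are that $G$ is the union of its maximal cyclic subgroups and that these are non-trivial, and both are immediate for a non-trivial finite group.

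The paper's route buys nothing extra for this theorem, but it fits the rest of the paper. The identity is always a dominating vertex, so it is deleted in $\mathcal{E}^{**}(G)$, and a claw centred at $e$ is useless there. By contrast, the case-by-case claws in minimal non-cyclic subgroups are prototypes of the configurations used later, for instance in Lemma~\ref{excludedsbgs}. An example is the claw in $Q_8$ centred at the involution $x_1^2$ rather than at $e$.
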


The presence of possible dominating vertices makes it harder to classify groups whose graph $\mathcal{E}^{**}(G)$ is claw-free. A helpful observation, however, is that — as described in Section \ref{preliminar} — the set of dominant vertices forms a cyclic subgroup of the center, known as the cyclicizer of $G$ and denoted by $\cyc(G)$. Taking into account the properties of this subgroup, we can provide the following characterization of finite solvable groups whose proper enhanced power graph is claw-free.

\begin{theorem}\label{solvable}
	Let $G$ be a finite solvable group such that $\mathcal{E}^{**}(G)$ 
	is claw-free. Let $F(G)$ be the Fitting subgroup of $G$ and 
let $F_2(G)$ denote the preimage 
	in $G$ of $F(G/F(G))$.
	Then either $F(G)$ and $G/F(G)$ are both cyclic or the  following hold:
	\begin{enumerate}
		\item $F(G)/\cyc(G)$ is a non-cyclic $p$-group.
		\item If $G$ is not nilpotent, then $F_2(G)/\cyc(G)$ is a Frobenius group with 
		Frobenius kernel $F(G)/\cyc(G)$, and $F_2(G)/F(G)$ is cyclic.
		\item If $G \neq F_2(G)$, then $G/F_2(G)$ is a $p$-group and 
		$G/\cyc(G)$ is a $2$-Frobenius group.
	\end{enumerate}
\end{theorem}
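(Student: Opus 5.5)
The plan rests on a local reformulation of claw-freeness. Write $Z=\cyc(G)$; by Section \ref{preliminar} it is a cyclic central subgroup, and it is exactly the set of dominating vertices of $\mathcal{E}(G)$.

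\emph{Step 1 (reformulation).} I would prove that $\mathcal{E}^{**}(G)$ is claw-free if and only if every $x\in G\setminus Z$ lies in at most two maximal cyclic subgroups of $G$.

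For the forward direction, suppose $x\notin Z$ lies in three distinct maximal cyclic subgroups $\langle g_1\rangle,\langle g_2\rangle,\langle g_3\rangle$. Then $g_i,g_j$ are non-adjacent for $i\ne j$, since $\langle g_i,g_j\rangle$ cyclic would contradict maximality. In particular no $g_i$ lies in $Z$, so $x;g_1,g_2,g_3$ is an induced claw.

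For the converse, let $x;y_1,y_2,y_3$ be a claw. Each $\langle x,y_i\rangle$ is cyclic, so it lies in a maximal cyclic subgroup $C_i$. These $C_i$ are distinct, because $y_i,y_j\in C$ with $C$ cyclic would make $y_i\sim y_j$.

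With this reformulation, the condition passes to subgroups $H\le G$ for elements outside $Z\cap H$ in the following sense. If $x\in H\setminus Z$ lies in $k$ maximal cyclic subgroups of $H$, then it lies in at least $k$ maximal cyclic subgroups of $G$. This holds because distinct maximal cyclic subgroups of $H$ cannot lie in a common cyclic subgroup of $G$.

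\emph{Step 2 (the Fitting subgroup).} Let $F=F(G)=\prod_p P_p$. In a nilpotent group, the maximal cyclic subgroups containing $x=\prod x_p$ are products of maximal cyclic subgroups of the $P_p$ containing $x_p$. Hence their number is the product over $p$ of the local counts.

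Suppose two Sylow subgroups $P,Q$ of $F$ are non-cyclic. Choose $x\in P$ of order $p$ with $x\notin Z$; I expect to show such an $x$ exists by choosing it outside the unique subgroup of order $p$ of $\cyc(P)$, or else handling the case where $P$ is generalized quaternion. Then $x$ lies in at least $2$ maximal cyclic subgroups of $P$ and in at least $2$ of $Q$ (taking $x_q=1$, every maximal cyclic subgroup of $Q$ counts). This gives at least $4$, a contradiction.

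So at most one Sylow subgroup $P$ of $F$ is non-cyclic. For the cyclic ones I must show they lie in $Z$; otherwise $F$ is cyclic, giving the first alternative once $G/F$ is shown to be cyclic. This gives item 1: $F/Z$ is a $p$-group, non-cyclic when $F$ is non-cyclic.

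\emph{Step 3 (the Frobenius structure).} Let $g\in F_2(G)\setminus F$, of $p'$-order modulo $Z$. If $g$ centralised some $y\in P\setminus Z$, then $x=y^{k}$ of order $p$ would lie in a maximal cyclic subgroup containing $g$-parts. It would also lie in the (at least two) maximal cyclic subgroups of $P$ through it, and this should produce a third. Hence $\overline{g}$ acts fixed-point-freely on $F/Z$, so $F_2(G)/Z$ is Frobenius with kernel $F/Z$. A Frobenius complement that is nilpotent modulo $F$ and has all abelian subgroups cyclic (by a similar counting argument) is cyclic, giving item 2.

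Item 3 follows by iterating the argument one level up. $G/F_2(G)$ acts on the cyclic group $F_2(G)/F$, and fixed-point considerations force $G/F_2(G)$ to be a $p$-group. The standard characterisation of $2$-Frobenius groups then applies to $G/Z$.

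\emph{Main obstacle.} The hard part is controlling the interaction with $Z$: elements outside $Z$ whose powers do fall into $Z$, and showing that the cyclic Sylow factors of $F$ really lie in $Z$. I expect the crucial tool is to choose test elements of prime order carefully and to use that $Z$ is central and cyclic, so every element of $G$ adjoined to a maximal cyclic subgroup containing $Z$ behaves uniformly.
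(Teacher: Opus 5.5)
Your Step~1 reformulation is correct and useful: a non-dominating $x$ is the centre of an induced claw exactly when it lies in at least three maximal cyclic subgroups of $G$, and this count can only grow when you pass from a subgroup to $G$. It makes Step~2 cleaner than the paper's route through Lemma~\ref{excludedsbgs}, but your count there needs fixing. An element of a non-cyclic $p$-group need not lie in two maximal cyclic subgroups: in $C_p\times C_p$ every $x\neq 1$ lies only in $\langle x\rangle$. What works is that a non-cyclic group has at least three maximal cyclic subgroups, since it is not a union of two proper subgroups. So any $x\in F\setminus\cyc(G)$ with trivial component in a non-cyclic Sylow subgroup $Q$ of $F$ already lies in at least three. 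Applied to $x\in P_r\setminus\cyc(G)$ for a cyclic Sylow $P_r$ with $r\neq p$, this gives $P_r\le\cyc(G)$ at once, so that is not the real obstacle. To show $F/\cyc(G)$ is non-cyclic you still need Lemma~\ref{Lemma:K-cyclic-enh}: if $P\cap\cyc(G)\neq 1$, then $P$ is generalized quaternion.

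There are three genuine gaps.

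(a) You never treat the case where $F(G)$ is cyclic. You say the first alternative holds ``once $G/F$ is shown to be cyclic'', but this is a real statement (Theorem~\ref{Thm:F(G)-cyclic}) that needs the claw-free hypothesis. For example, $S_3\times D_{10}$ has $F\cong C_{15}$ and $G/F\cong C_2\times C_2$. The paper argues as follows: it embeds $G/F$ in the abelian group $\operatorname{Aut}(F)$ and takes $E\cong C_q\times C_q$. It shows $E$ acts faithfully on $F_{q'}$ and picks primes $p,r\neq q$ whose kernels in $E$ differ. It then builds a claw centred at $z\in F_p$ of order $p$. The leaves are a generator $a$ of $F$, a $q$-element $x$ centralizing $F_p$ but not $F_r$, and $x^f$ with $1\neq f\in F_r$. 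Nothing in your plan replaces this.

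(b) In Step~3 the count fails for the same reason as above. The element $x=y^k$ may lie in only one maximal cyclic subgroup of $P$, and the one through $\langle x,g\rangle$ may coincide with it, so no third subgroup appears. Working in $G/\cyc(G)$, you need a case split with different centres. If $g$ centralizes $C_F(y)$, which is non-cyclic and hence contains $C_p\times C_p$, then $g$ itself lies in at least $p+1$ maximal cyclic subgroups. Otherwise, conjugating $g$ by elements of $C_F(y)$ gives at least three distinct Sylow $q$-subgroups commuting with $y$, and $y$ is the centre (Lemma~\ref{frobcent}).

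(c) Item~3 is not argued. ``Fixed-point considerations'' does not show that $G/F_2(G)$ has no $p'$-elements. Nor does it show that the $p$-part of $G/F$ acts fixed-point-freely on $F_2(G)/F$, which the $2$-Frobenius conclusion requires. The paper splits $G/F$ as a normal Hall $p'$-subgroup $N/F$ extended by a Sylow $p$-subgroup $X/F$. It produces a claw from a commuting pair of elements of orders $q$ and $p$ to force $X/F$ to act fixed-point-freely. It then uses Thompson's theorem to get $N=F_2(G)$.
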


As a corollary, we can obtain the following characterization of nilpotent groups whose enhanced power graph is claw-free.

\begin{corollary}\label{nilpotent}
Let $G$ be a finite non-cyclic nilpotent group. Then the proper enhanced power graph $\mathcal{E}^{**}(G)$ is claw-free if and only if the following conditions hold:
\begin{enumerate}[(i)]
\item $G \cong P \times C_n$, where $P$ is a non-cyclic $p$-group and $\gcd(|P|,n)=1$;
\item $\mathcal{E}^{**}(P)$ is claw-free.
\end{enumerate}
\end{corollary}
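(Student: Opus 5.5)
Write $G=P_1\times\cdots\times P_k$ as the direct product of its Sylow subgroups. For a nilpotent group, $\langle u,v\rangle$ is cyclic if and only if, for every $i$, the $p_i$-components $u_i,v_i$ generate a cyclic subgroup of $P_i$. Hence $\cyc(G)=\prod_i \cyc(P_i)$, and in particular $\cyc(P_i)=P_i$ exactly when $P_i$ is cyclic. I will use these two facts throughout.

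\emph{Necessity of (i).} Assume $\mathcal{E}^{**}(G)$ is claw-free. Since $G$ is not cyclic, some Sylow subgroup, say $P=P_1$, is non-cyclic. Suppose a second Sylow subgroup $Q=P_2$ is also non-cyclic. Choose $x,y\in P$ with $\langle x,y\rangle$ not cyclic, and $a,b\in Q$ with $\langle a,b\rangle$ not cyclic; all of these can be taken outside the cyclicizers of $P$ and $Q$. Take the centre $c=xa$, which is non-dominating because $x\notin\cyc(P)$. Take the leaves $xb$, $ya$ and $y'b'$, where $y'\in P$ is chosen so that $\langle x,y'\rangle$ is cyclic but $\langle y,y'\rangle$ is not, and $b'\in Q$ so that $\langle a,b'\rangle$ is cyclic but $\langle b,b'\rangle$ is not. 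A natural first try is $y'=x$ and $b'=a$, but that makes $y'b'$ equal to the centre. So I would instead use leaves built from powers, for example $x^p$ and $a^q$ when $x,a$ have non-trivial powers.

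A cleaner route is to exploit the fact that $P$ non-cyclic contains two non-adjacent non-central-type elements $x,y$, and likewise $Q$ contains $a,b$. Then the centre $xa$ has leaves $x\,a^{q}$-type... This is getting delicate. Simplest: take the centre to be the identity-like element $1_P\cdot a'$, where $a'$ is any non-dominating element of $Q$ adjacent to $a$. Its neighbours are then all $g\,h$ with $g\in P$ arbitrary and $\langle a',h\rangle$ cyclic. Three pairwise non-adjacent neighbours arise as $x\cdot 1$, $y\cdot 1$ and $1\cdot b$ whenever $\langle a',b\rangle$ is cyclic yet $b$ is not adjacent to $x,y$... but $x\cdot1$ and $1\cdot b$ are always adjacent. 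I would therefore pick leaves $x\,a'$, $y\,a'$ and $z\,b$ with $\langle x,y\rangle$ non-cyclic, $z$ chosen non-adjacent to $x$ and $y$ in $P$ (or using the $Q$-coordinate to break adjacency), and the centre being $1\cdot a'$ with $a'$ adjacent to $b$. The main obstacle is guaranteeing these choices; in a non-cyclic $p$-group one always has at least $p+1\ge 3$ maximal cyclic subgroups, so three pairwise non-adjacent generators of distinct maximal cyclic subgroups exist in $P$. Using these $x,y,z$ with a common $Q$-coordinate $a'\notin\cyc(Q)$ gives the claw centred at $1\cdot a'$ (non-dominating since $Q$ non-cyclic). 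Hence at most one Sylow subgroup is non-cyclic, which gives (i).

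\emph{Necessity of (ii).} Write $G=P\times C_n$, so $\cyc(G)=\cyc(P)\times C_n$. Any claw $c;\,\ell_1,\ell_2,\ell_3$ in $\mathcal{E}^{**}(P)$ lifts to the claw $c;\,\ell_1,\ell_2,\ell_3$ in $\mathcal{E}^{**}(G)$, using trivial $C_n$-coordinates. The lift is induced because adjacency is decided in the $P$-coordinate, and the vertices remain non-dominating because they lie outside $\cyc(P)$.

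\emph{Sufficiency.} Conversely, suppose $G=P\times C_n$ and $\mathcal{E}^{**}(P)$ is claw-free. Two elements $g=uc$ and $g'=u'c'$ are adjacent exactly when $u$ and $u'$ are adjacent in $\mathcal{E}(P)$, since the $C_n$-part is always cyclic. Moreover $g$ is dominating if and only if $u\in\cyc(P)$. Consider a claw in $\mathcal{E}^{**}(G)$ and project it to $P$. Its leaves are pairwise non-adjacent, so their projections are distinct and pairwise non-adjacent. The projections of all four vertices lie outside $\cyc(P)$, and the centre's projection is distinct from and adjacent to each leaf projection. This gives a claw in $\mathcal{E}^{**}(P)$, a contradiction.

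I expect the construction of the claw in the necessity of (i) to be the main step.
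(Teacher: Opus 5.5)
Your proof is correct. Parts (ii) and the converse match the paper in substance, but you establish condition (i) by a different route.

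\textbf{How the paper gets (i).} It applies Theorem~\ref{solvable}. Since $G=F(G)$ is non-cyclic, $G/\cyc(G)$ is a non-cyclic $p$-group, so the $p'$-part of $G$ lies in the cyclic group $\cyc(G)$. Behind this sit Lemma~\ref{excludedsbgs}, which excludes $C_p\times C_p\times C_q$ and generalized quaternion subgroups when the cyclicizer is trivial, and Proposition~\ref{prop:FH-p-group}, which transfers non-cyclicity to the quotient.

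\textbf{How you get (i).} You stay inside $G=P_1\times\cdots\times P_k$ and use the coordinatewise criterion for cyclicity. Suppose two Sylow subgroups $P$ and $Q$ are non-cyclic. Take $a'\in Q\setminus\cyc(Q)$, and take $x,y,z$ generating three distinct maximal cyclic subgroups of $P$. Then $1\cdot a'$ with leaves $xa',ya',za'$ is an induced claw whose vertices are all non-dominating. This argument is self-contained and elementary, and it avoids both the passage to $G/\cyc(G)$ and the generalized-quaternion subtlety. The paper's version costs two lines only because that machinery is already needed for the solvable classification.

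\textbf{Parts (ii) and the converse.} Your lifting and projection argument is Corollary~\ref{behacyc} unwound for $G/\cyc(G)\cong P/\cyc(P)$.

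\textbf{Write-up fixes.}
\begin{enumerate}
\item In the paragraph on (i), delete the abandoned attempts and keep only the final construction.
\item Justify the existence of three maximal cyclic subgroups: every element lies in one, each is proper because $P$ is non-cyclic, and a group is never the union of two proper subgroups.
\item In the sufficiency direction, add one line explaining why the centre's projection differs from each leaf's projection. If $u_0=u_1$, then $u_0$ would be non-adjacent to $u_2$, contradicting that the centre is adjacent to the second leaf.
\end{enumerate}
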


The previous corollary essentially reduces the study of nilpotent groups $G$ with $\mathcal{E}^{**}(G)$ claw-free to the case where $G$ is a non-cyclic $p$-group. In this setting, the following result holds.

\begin{theorem}\label{pgroups}
Let $G$ be a non-cyclic finite 
$p$-group. Then $\mathcal{E}^{**}(G)$ is claw-free if and only if $G$ is a $p$-group of one of the following types:
\begin{enumerate}[(i)]
\item a non-cyclic $p$-group of exponent $p$;
\item $C_{2}\times C_{4}$;
\item $C_{4}\times C_{4}$;
\item a dihedral $2$-group;
\item a non-abelian $2$-group of exponent $4$ containing no proper subgroup isomorphic to $Q_8$;
\item a generalized quaternion group of order $2^k$, where $k \geq 3$.
\end{enumerate}
\end{theorem}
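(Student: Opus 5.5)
Throughout, $G$ is a non-cyclic $p$-group. The first step is to identify the dominating vertices. If $\cyc(G)$ is nontrivial it is a cyclic central subgroup, and in a $p$-group it contains a unique subgroup of order $p$. A non-cyclic $p$-group has a nontrivial cyclicizer only if it has a unique subgroup of order $p$ that lies in every maximal cyclic subgroup. This means $G$ is generalized quaternion ($p=2$), and then $\cyc(G)=Z(G)$ has order $2$. In every other case $\cyc(G)=1$, so $\mathcal{E}^{**}(G)=\mathcal{E}(G)\setminus\{1\}$.

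In a $p$-group, two elements are adjacent in $\mathcal{E}(G)$ exactly when one lies in the cyclic group generated by the other. The basic claw-finding tool is therefore this: take a vertex $x$ of order $p$, lying in the cyclic subgroups of several distinct maximal cyclic subgroups, and pick neighbours $y_1,y_2,y_3$ with $\langle y_i\rangle$ pairwise non-comparable and pairwise non-cyclic-generating. In the generalized quaternion case, the quotient $G/\cyc(G)$ is dihedral, and the centre vertex is deleted.

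For sufficiency, I would check each type directly.
\begin{enumerate}[(i)]
\item Exponent $p$: the graph is a disjoint union of cliques (the sets $\langle x\rangle\setminus\{1\}$), so it is claw-free.
\item $C_2\times C_4$: the only vertices with several neighbours are the two involutions $(0,2)$ and $(1,2)$, each lying in two cyclic subgroups of order $4$. The neighbourhood of such an involution is the union of two cliques, so it contains no independent triple.
\item $C_4\times C_4$: each involution lies in exactly two cyclic subgroups of order $4$, so again its neighbourhood is a union of two cliques. Elements of order $4$ have neighbourhood inside a single clique.
\item Dihedral groups: the reflections are isolated, and the rotation subgroup is cyclic and forms a clique.
\item Exponent $4$, no proper $Q_8$: I would need every involution to lie in at most two cyclic subgroups of order $4$. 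I expect to argue that if an involution $z$ lay in three cyclic subgroups $\langle a\rangle,\langle b\rangle,\langle c\rangle$ with $a^2=b^2=c^2=z$, then some pair generates $Q_8$ or forces a contradiction. This is the delicate point, and the statement itself suggests that the absence of $Q_8$ is exactly what controls it.
\item Generalized quaternion: after deleting $Z(G)$, the elements outside the maximal cyclic subgroup have order $4$ and lie in pairwise disjoint cliques. The cyclic subgroup of index $2$ minus the centre is a clique, and it is disconnected from those elements once the centre is removed.
\end{enumerate}

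For necessity, assume $\mathcal{E}^{**}(G)$ is claw-free and $\cyc(G)=1$. Claw-freeness forces every vertex $x$ to lie in at most two maximal cyclic subgroups whose generators are pairwise non-adjacent. The key cases are as follows.
\begin{itemize}
\item If $p$ is odd and the exponent is greater than $p$, pick $x$ of order $p^2$. The $p$ elements $x^{p}$-roots structure, together with $x\cdot y$ for $y$ of order $p$ outside $\langle x\rangle$ (using regularity or $\Omega_1$ arguments), should give at least three independent neighbours of $x^p$. I would try first to find an abelian section $C_{p^2}\times C_p$; in it, $x^p$ lies in $p\ge3$ cyclic subgroups of order $p^2$, which gives a claw.
\item If $p=2$ and the group contains an element of order $8$, then either $G$ has a cyclic maximal subgroup and the classification of such groups (dihedral, semidihedral, modular, abelian, quaternion) applies, checking semidihedral and modular groups for claws directly; or $C_8\times C_2$-type sections give claws.
\item If $G$ has exponent $4$ and is abelian, it is $C_2^a\times C_4^b$. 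Then $C_4\times C_2\times C_2$ and $C_4^3$ each give an involution in at least three cyclic subgroups of order $4$, leaving only (ii) and (iii).
\item If $G$ is non-abelian of exponent $4$ and contains a proper $Q_8$, then an element outside it combines with the three cyclic subgroups of $Q_8$ through $z$ to give a claw at $z$. I expect this to be the main obstacle, requiring a careful choice of the extra element.
\end{itemize}
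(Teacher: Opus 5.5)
The step that cannot be carried out is your sufficiency argument for (v). Your reduction there is correct. For a non-cyclic $2$-group $G\neq Q_8$ of exponent $4$ one has $\cyc(G)=1$, and an element $a$ of order $4$ has only the two mutually adjacent neighbours $a^{2},a^{-1}$. The neighbourhood of an involution $t$ is a disjoint union of edges, one for each cyclic subgroup of order $4$ containing $t$. So $\mathcal{E}^{**}(G)$ is claw-free exactly when every involution is the square of at most four elements.

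The implication you hope for, that three such cyclic subgroups force a $Q_8$, is false. Take $G=C_4\rtimes C_4=\langle x,y\mid x^4=y^4=1,\ y^{-1}xy=x^{-1}\rangle$. Every $x^iy^{\pm1}$ squares to $y^2$, so $\{y^2;\,y,xy,x^2y\}$ is an induced claw in $\mathcal{E}^{**}(G)$. Yet $\Phi(G)=\langle x^2,y^2\rangle$, and the three maximal subgroups $\langle x,y^2\rangle$, $\langle y,x^2\rangle$, $\langle xy,x^2\rangle$ are all isomorphic to $C_4\times C_2$. Hence $G$ is non-abelian of exponent $4$ with no subgroup isomorphic to $Q_8$. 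Another example is $C_2\times C_2\times D_8$, which contains $C_4\times C_2\times C_2$.

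So no argument can close this step. The class in (v), as stated, is too large, and the condition you isolated is the one actually needed. The paper never confronts this because it does not verify the list by hand. Apart from the generalized quaternion case (treated as you do, via the dihedral quotient $G/Z(G)$), it observes that $\cyc(G)=1$ and $\mathcal{E}^{**}(G)=\mathcal{P}^*(G)$, then quotes \cite[Proposition 2.5]{claw-free-pwr-graph}. The problem therefore sits in the formulation of (v), not in your method.

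Independently, your necessity direction is only a sketch, and two of its reductions fail as stated.

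For $p$ odd, $C_{p^2}\times C_p$ need not be a subgroup: $M_{p^3}$ has none, although there $a^p$ still lies in $p$ cyclic subgroups of order $p^2$. What works is the following. Take a maximal cyclic subgroup $\langle g\rangle$ of order $p^m\geq p^2$ and $K\leq N_G(\langle g\rangle)$ with $|K:\langle g\rangle|=p$, so that $K\cong C_{p^m}\times C_p$ or $M_{p^{m+1}}$. Then exhibit $p\geq 3$ cyclic subgroups of order $p^2$ through a common subgroup of order $p$.

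For $p=2$, the dichotomy ``cyclic maximal subgroup, or a $C_8\times C_2$ subgroup'' is false. $C_4\wr C_2$ has order $32$ and exponent $8$, and each of its elements of order $8$ is self-centralizing. It does contain a claw, centred at its central involution, but not of the kind you describe. Also, with $K$ defined as above, the case $K$ dihedral yields no claw inside $K$. A separate argument is then needed to force $G$ itself to be dihedral.

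Conversely, the proper-$Q_8$ case you single out as the main obstacle is immediate. Such a $G$ is not generalized quaternion, so $\cyc(G)=1$ and $\{z;\,i,j,k\}\subseteq Q_8$ is already a claw.

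A minor slip: in $C_2\times C_4$ only $(0,2)$ is a square, and $(1,2)$ is isolated. Your conclusion for (ii) is unaffected.
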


In the non-solvable case, we are able to give a complete and definitive result. First, in Theorem~\ref{thm:almost-simple} we prove that, after factoring out the subgroup of dominating vertices, the resulting quotient group is almost simple. Then, in Theorem~\ref{PSL}, we prove that the only almost simple groups that can arise are $\PSL(2,q)$ and $\PGL(2,q)$ with $q\geq 5.$ This allows us to establish the following result:

\begin{theorem}\label{nonsolvable}
	\label{thm:central-extensions-PSL-PGL}
	Let $G$ be a non-solvable finite group. Then $\mathcal E^{**}(G)$ claw-free if and only if $G\cong X\times Y,$ 
	where $X$ is cyclic, 
	$\gcd(|X|,|Y|)=1,$ and $Y$ is one of the following:
	\begin{enumerate}
		\item 
	$
		Y\cong\operatorname{PSL}(2,q)$ or
		$		Y\cong\operatorname{SL}(2,q), 
		$ with $q\geq 5.$
		
		\item
		$
		Y\cong\operatorname{PGL}_2(q)
		$
		or
		$
		Y\cong
		\operatorname{SL}^{(2)}(2,q),
		$
		where $q\geq 5$ is odd and
		\[
		1\longrightarrow C_2
		\longrightarrow
		\operatorname{SL}^{(2)}(2,q)
		\longrightarrow
		\operatorname{PGL}(2,q)
		\longrightarrow1
		\]
		is the unique stem double cover of
		$\operatorname{PGL}(2,q)$ whose Sylow $2$-subgroups are
		generalized quaternion.
	\end{enumerate}
\end{theorem}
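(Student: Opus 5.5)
The plan is to take Theorem~\ref{thm:almost-simple} and Theorem~\ref{PSL} as given: for non-solvable $G$ with $\mathcal E^{**}(G)$ claw-free, $\bar G=G/\cyc(G)$ is isomorphic to $\PSL(2,q)$ or $\PGL(2,q)$ with $q\ge 5$. Write $Z=\cyc(G)$, a cyclic central subgroup. The ``only if'' direction then becomes a problem about central extensions of these groups by a cyclic group, and the ``if'' direction is a direct check on the listed groups.

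For the forward direction, first split $Z=Z_{\pi}\times Z_{\pi'}$, where $\pi$ is the set of primes dividing the Schur multiplier relevant to $\bar G$. The multiplier of $\PSL(2,q)$ is $C_2$ for $q\ne 4,9$; for $\PGL(2,q)$ with $q$ odd the relevant stem extensions have centre $C_2$. I would treat the exceptional multipliers for $q=4,9$ (namely $\PSL(2,4)\cong\PSL(2,5)$ and $\PSL(2,9)\cong A_6$) separately.
\begin{enumerate}
\item \emph{Coprime part splits off.} Since $Z$ is central and $\gcd(|Z_{\pi'}|,|\bar G|)$ is controlled, a standard transfer/Schur--Zassenhaus argument gives $G=Z_{\pi'}\times G_0$.
\item \emph{Reduce to the perfect core.} Take $Y=G^{(\infty)}$ (or $G'$ together with a suitable element in the $\PGL$ case), so that $G=X\times Y$ with $X$ cyclic of order coprime to $|Y|$.
\item \emph{Analyse $Y$ as a stem extension.} For $\bar G\cong\PSL(2,q)$, $Y$ is $\PSL(2,q)$ or $\SL(2,q)$, apart from the exceptional covers $3.A_6$, $6.A_6$ and $\SL(2,4)$-type covers.
\item \emph{Exclude extra central factors.} If $Z$ contained an element $z$ of order $r$ with $r\mid|Y|$ and $z\notin Y$, I would build a claw. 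Take $x\in G$ of order $r$ not in $Z$. Then $xz$, $x$, and a conjugate $x^g$ generating a non-cyclic $\langle x,x^g\rangle$ serve as leaves around a centre such as $x^{r}$-related elements or $xz^{i}$. More concretely, use the centre $y=xz$ and choose three pairwise non-cyclic neighbours among $x$, $z$-translates and elements of $C_Y(x)$. This forces $\gcd(|X|,|Y|)=1$.
\end{enumerate}

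For $\PGL(2,q)$ there are two stem double covers. I would exclude the one with semidihedral Sylow $2$-subgroups as follows: semidihedral groups contain $C_2\times C_2$ together with an element of order $4$ outside, which produces a claw centred at an involution that is not dominating. The same idea should kill the exceptional covers of $A_6$ and the $3$-fold covers, via a claw inside a Sylow $3$-subgroup isomorphic to $C_3\times C_3$ extended centrally.

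For the converse, I would use the maximal cyclic subgroups of $\PSL(2,q)$: these are the split and non-split tori of orders $(q\pm1)/d$ and the unipotent subgroups of exponent $p$, which intersect pairwise trivially. A claw centred at $u$ needs three maximal cyclic subgroups through $u$ that pairwise generate non-cyclic groups. In $\PSL(2,q)$ each non-identity element lies in a unique maximal cyclic subgroup (for odd $p$, elements of order $p$ lie in one of exponent $p$), so neighbourhoods are cliques. For $\SL(2,q)$ and the quaternion double cover of $\PGL(2,q)$ the dominating set is the centre $\langle -1\rangle$, and the maximal cyclic subgroups still intersect only in the centre, because the Sylow $2$-subgroups are cyclic or generalized quaternion, consistent with Theorem~\ref{pgroups}(vi). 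For $\PGL(2,q)$ the maximal cyclic subgroups are the tori $C_{q\pm1}$ and $C_p$, and an involution lies in exactly one of them, so the property holds again. Taking a direct product with a cyclic group of coprime order does not change this, because $\langle(a,b),(c,d)\rangle$ is cyclic iff both coordinate pairs generate cyclic groups.

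I expect the main obstacle to be Step~4 and the cover classification: pinning down precisely which central extensions of $\PGL(2,q)$ by cyclic groups survive, and ruling out the exceptional multipliers. I would handle this through the Sylow $2$- and $3$-subgroup structure and Theorem~\ref{pgroups}.
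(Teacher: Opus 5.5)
Your converse direction is fine. It redoes Step~1 of Theorem~\ref{PSL} directly on the covers, whereas the paper simply notes that $G/\cyc(G)\cong\PSL(2,q)$ or $\PGL(2,q)$ and applies Corollary~\ref{behacyc}. The forward direction, however, has a genuine gap in Steps~2 and~4.

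Once $\bar G=G/\cyc(G)$ is known to be $\PSL(2,q)$ or $\PGL(2,q)$, Corollary~\ref{behacyc} and Step~1 of Theorem~\ref{PSL} make $\mathcal E^{**}(G)$ \emph{automatically} claw-free. So claws can no longer rule anything out. What remains is a purely group-theoretic question: which central extensions $G$ of $\bar G$ by a cyclic group $Z$ actually satisfy $Z=\cyc(G)$? The tool you never invoke, and which drives the paper's argument, is Lemma~\ref{Lemma:K-cyclic-enh}: for each prime $r$ dividing $|\cyc(G)|$, the Sylow $r$-subgroups of $G$ are cyclic or generalized quaternion.

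Your Step~4 claw does not exist. If $z\in Z$ and $x\notin\langle z\rangle$ both have order $r$, the proposed centre $xz$ is adjacent neither to $x$ nor to any other translate $xz^i$, since each such pair generates $\langle x,z\rangle\cong C_r\times C_r$. In fact the configuration is contradictory from the outset: $z\in\cyc(G)$ forces $\langle x,z\rangle$ to be cyclic. That one line, equivalently the Sylow criterion, is what gives $\gcd(|X|,|Y|)=1$.

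The same criterion is needed at several points your plan does not reach.
\begin{itemize}
\item \textbf{The $2$-part of $Z$.} Step~2 asserts $G=X\times Y$, but for $q$ odd the group $G=\SL(2,q)\circ C_4$ has $G/Z(G)\cong\PSL(2,q)$ and $G^{(\infty)}=\SL(2,q)$, and is not of this form. Step~4 does not detect it, because the only involution of $Z(G)$ lies in $G^{(\infty)}$. It is excluded only because its Sylow $2$-subgroup $Q_{2^k}\circ C_4$ is neither cyclic nor generalized quaternion. In general the criterion forces the $2$-part of $\cyc(G)$ to have order at most $2$, with generalized quaternion Sylow $2$-subgroups when it is nontrivial (a cyclic one cannot map onto the dihedral Sylow $2$-subgroup of $\bar G$).
\item \textbf{The $3$-part of $M(A_6)$.} This is excluded because a cyclic Sylow $3$-subgroup cannot map onto $C_3\times C_3$. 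Your suggested claw does not help here: no claw lives inside the Sylow $3$-subgroup of $3.A_6$, which has exponent $3$.
\item \textbf{The semidihedral double cover of $\PGL(2,q)$.} This is excluded because $2$ would divide $|\cyc(G)|$. Your remark that its central involution is ``not dominating'' is precisely this contradiction, so the claw is superfluous.
\end{itemize}

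Finally, Schur--Zassenhaus is not available in Step~1, since coprimality of $|Z_{\pi'}|$ and $|G:Z_{\pi'}|$ is part of what you want to prove. Use the universal coefficient sequence instead. Since $|Z_{\pi'}|$ is coprime to $|M(\bar G)|\,|\bar G_{\mathrm{ab}}|$, we get $H^2(\bar G,Z_{\pi'})=0$, so $G/Z_\pi\cong Z_{\pi'}\times\bar G$ and hence $G=Z_{\pi'}\times G_0$. This is what the paper does for $\PGL(2,q)$. For $\PSL(2,q)$ it instead works with the perfect stem extension $G'$ and its intersection with $\cyc(G)$.
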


\section{Some preliminary results}\label{preliminar}
\begin{proof}[Proof of Theorem \ref{Thm:enh-claw-free-imply-uniuq-subgroup}]
Let $G$ be a minimal counterexample. Since $\mathcal{E}(H)$ is claw-free for every 
$H\leq G$, all the proper subgroups of $G$ are cyclic. Hence $G$ is minimal non-cyclic. 
Groups with this property have been classified by Miller and Moreno in \cite{miller_moreno}. There are three 
possibilities: $G\cong C_p\times C_p$, $G\cong Q_8$, $G\cong C_p\rtimes C_{q^m}$. 
In all these cases $\mathcal{E}(G)$ contains a claw $\{y, x_1, x_2, x_3\}$. In the 
first case, take $y=1$ and $x_1, x_2, x_3$ generators of three different subgroups of 
order $p$; in the second case, take $x_1, x_2, x_3$ generators of the three cyclic 
subgroups of order $4$ and $y=x_1^2=x_2^2=x_3^2$; in the third case, take $y=1$ and 
$x_1, x_2, x_3$ generators of three different Sylow $q$-subgroups.
\end{proof}

For a finite group $G$, the set $$\cyc(G)=\{x\in G\mid \langle x, y\rangle \text{ is cyclic for every $y\in G$}\}$$ of dominating vertices of $\mathcal E(G)$ has been studied by Patrick and
Wepsic, who called it the cyclicizer of $G$. The following holds:

\begin{lemma}[{\cite[Theorem 9.1 (b)]{PJC-Graphs-def-grp}}]\label{Lemma:K-cyclic-enh}
 Let $G$ be a finite group. Then $\cyc(G)$ is a cyclic subgroup of $Z(G)$. More precisely, $\cyc(G)$ is the product of the Sylow $p$-subgroups of $Z(G)$ for $p \in \tilde \pi,$ where $\tilde \pi$ is the set of
 primes $p$ for which the Sylow $p$-subgroup of $G$ is cyclic or generalized quaternion.
\end{lemma}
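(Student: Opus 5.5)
The plan is to reduce the whole statement to a prime-by-prime analysis. First, $\cyc(G)\subseteq Z(G)$: if $x\in\cyc(G)$ then $\langle x,g\rangle$ is cyclic, hence abelian, for every $g\in G$, so $x$ commutes with every $g$. Next, for a central element $x$ and any $g$, the group $\langle x,g\rangle$ is abelian. A finite abelian group is cyclic if and only if each of its Sylow subgroups is cyclic, and the Sylow $p$-subgroup of $\langle x,g\rangle$ is $\langle x_p,g_p\rangle$, where $x_p,g_p$ denote the $p$-parts. Hence a central $x$ lies in $\cyc(G)$ if and only if, for every prime $p$, the group $\langle x_p,h\rangle$ is cyclic for every $p$-element $h\in G$. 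In particular $x\in\cyc(G)$ if and only if every $x_p\in\cyc(G)$. So it suffices to determine, for each prime $p$, which central $p$-elements $z$ satisfy
\[
\langle z,h\rangle \text{ cyclic for all $p$-elements } h\in G. \qquad (\ast)
\]

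The key claim is that the set of such $z$ is the whole Sylow $p$-subgroup $Z(G)_p$ of $Z(G)$ when $p\in\tilde\pi$, and is trivial when $p\notin\tilde\pi$.

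\emph{Trivial when $p\notin\tilde\pi$.} Suppose $1\neq z$ satisfies $(\ast)$, and let $z_0$ be the power of $z$ of order $p$. For any $h$ of order $p$, the group $\langle z_0,h\rangle$ is a cyclic $p$-group, and a cyclic $p$-group has a unique subgroup of order $p$. Therefore $\langle h\rangle=\langle z_0\rangle$. So a Sylow $p$-subgroup $P$ has a unique subgroup of order $p$, and by the standard classification $P$ is cyclic or generalized quaternion, i.e.\ $p\in\tilde\pi$.

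\emph{All of $Z(G)_p$ when $p\in\tilde\pi$.} Let $z\in Z(G)_p$ and let $h$ be any $p$-element. Since $\langle z\rangle$ is a normal $p$-subgroup, $z$ lies in every Sylow $p$-subgroup, so $z$ and $h$ lie in a common Sylow subgroup $P$.
\begin{itemize}
\item If $P$ is cyclic, then $\langle z,h\rangle\le P$ is cyclic.
\item If $P$ is generalized quaternion, then $z\in Z(G)\cap P\le Z(P)$, which has order $2$. So $z$ is trivial or the unique involution of $P$, and then $z\in\langle h\rangle$ whenever $h\neq 1$. In either case $\langle z,h\rangle$ is cyclic.
\end{itemize}

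Combining the two cases, $\cyc(G)=\prod_{p\in\tilde\pi}Z(G)_p$. This is a subgroup, being a product of Sylow subgroups of the abelian group $Z(G)$. It is cyclic because each factor is cyclic: a subgroup of a cyclic group in the first case, and of order at most $2$ in the quaternion case.

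The only step that is not purely formal is the use of the classical fact that a $p$-group with a unique subgroup of order $p$ is cyclic or generalized quaternion. I would quote it rather than reprove it.
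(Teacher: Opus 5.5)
Your proof is correct. The paper gives no proof of this lemma; it quotes it from Cameron's survey (Theorem 9.1(b) there, going back to Patrick and Wepsic's work on the cyclicizer). So there is no internal argument to compare with, and your proposal is a complete, self-contained substitute.

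All the steps check out:
\begin{itemize}
\item \emph{Reduction.} For central $x$, the group $\langle x,g\rangle$ is abelian, with Sylow $p$-subgroup $\langle x_p,g_p\rangle$. As $g$ ranges over $G$, $g_p$ ranges over all $p$-elements. This correctly reduces membership in $\cyc(G)$ to the prime-by-prime condition $(\ast)$.
\item \emph{Case $p\notin\tilde\pi$.} A nontrivial $z$ satisfying $(\ast)$ would make $\langle z_0\rangle$ the unique subgroup of order $p$ in $G$, forcing $p\in\tilde\pi$.
\item \emph{Case $p\in\tilde\pi$.} Since $Z(G)_p$ lies in every Sylow $p$-subgroup, $z$ and $h$ lie in a common Sylow subgroup $P$. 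In the quaternion case $Z(G)\cap P\le Z(P)$ has order $2$, which gives $(\ast)$.
\end{itemize}
The only external input is Burnside's theorem that a $p$-group with a unique subgroup of order $p$ is cyclic or generalized quaternion, which is the standard ingredient here.

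Writing it out this way also makes explicit two consequences the paper later uses without comment. First, every prime dividing $|\cyc(G)|$ has cyclic or generalized quaternion Sylow subgroups. Second, the $2$-part of $\cyc(G)$ has order at most $2$ when the Sylow $2$-subgroup is generalized quaternion. Both are used in Proposition~\ref{prop:FH-p-group} and in the proof of Theorem~\ref{nonsolvable}.
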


\begin{lemma}\label{Lemma:Cyclic-iff-cyclic}
	Let $x, y \in G.$ Then $\langle x, y\rangle$ is cyclic if and only if 
	$\langle x, y\rangle\cyc(G)/\cyc(G)$ is cyclic.                                         
\end{lemma}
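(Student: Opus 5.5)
The forward direction is immediate: if $\langle x,y\rangle$ is cyclic, then so is its image $\langle x,y\rangle\cyc(G)/\cyc(G)$. The work is in the converse. Set $C=\cyc(G)$ and $H=\langle x,y\rangle C$. By Lemma~\ref{Lemma:K-cyclic-enh}, $C$ is a cyclic subgroup of $Z(G)$. Suppose $H/C$ is cyclic, generated by $gC$ with $g\in H$. Then $H=\langle g\rangle C$, which is abelian because $C$ is central. It suffices to show that $H$ is cyclic, since $\langle x,y\rangle\le H$ and subgroups of cyclic groups are cyclic.

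To show $H$ is cyclic, we use that $C$ consists of dominating vertices. $C$ is cyclic, so $C=\langle c\rangle$ for some $c$. By the definition of the cyclicizer, $c\in\cyc(G)$ means $\langle c,g\rangle$ is cyclic. Since $H=\langle g\rangle\langle c\rangle=\langle g,c\rangle$, the group $H$ is cyclic, which completes the argument.

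The only step needing care is that $C$ is a cyclic group, so that it has a single generator. This is exactly the content of Lemma~\ref{Lemma:K-cyclic-enh}. Without it, we would need an induction over a generating set of $C$: add generators one at a time, using at each stage that the current cyclic group is generated by one element $w$ and that $\langle w,c_i\rangle$ is cyclic because $c_i$ dominates. That induction also works, so there is no real obstacle.
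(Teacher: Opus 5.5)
Your proof is correct and follows essentially the same route as the paper: write $\langle x,y\rangle\cyc(G)=\langle g\rangle\cyc(G)=\langle g,c\rangle$, where $c$ generates the cyclic group $\cyc(G)$. This group is cyclic because $c$ is a dominating vertex, and $\langle x,y\rangle$ is a subgroup of it, which gives the converse; your forward direction (the image of a cyclic group is cyclic) is a harmless simplification of the paper's.
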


\begin{proof}
	Let $C = \cyc(G)$.
	By Lemma~\ref{Lemma:K-cyclic-enh}, $C = \langle c\rangle$ is cyclic.       
	Since $C$ contains dominating vertices in $\mathcal{E}(G)$, the subgroup
	$\langle c, g\rangle$ is cyclic for every $g \in G$.
	If $\langle x, y\rangle = \langle z\rangle$, then clearly
	$\langle x, y\rangle C = \langle z, c\rangle$ is cyclic.
	Now let $\langle x, y\rangle C = \langle g\rangle C$. Then
	$\langle x, y\rangle \leq \langle g\rangle C = \langle g, c\rangle$, 
	which is cyclic. Hence $\langle x, y\rangle$ is cyclic.
	This completes the proof.
\end{proof}

\begin{corollary}\label{behacyc}
	Let $G$ be any finite group. Then the following hold:
	\begin{enumerate}
		\item For every $x, y \in G$, $x$ and $y$ are adjacent in 
		$\mathcal{E}^{**}(G)$ if and only if $x\cyc(G)$ and $y\cyc(G)$ 
		are adjacent in $\mathcal{E}^{**}(G/\cyc(G))$.
		\item The identity element is the only dominating element in 
		$\mathcal{E}^{**}(G/\cyc(G))$.
		\item $\mathcal{E}^{**}(G)$ is claw-free if and only if 
		$\mathcal{E}^{**}(G/\cyc(G))$ is claw-free.
	\end{enumerate}
\end{corollary}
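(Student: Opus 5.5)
The plan is to write $C=\cyc(G)$ and $\bar G=G/C$, and to prove the three items in order, each resting on Lemma~\ref{Lemma:Cyclic-iff-cyclic}.

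\emph{Item 1.} Let $x,y\in G$. By Lemma~\ref{Lemma:Cyclic-iff-cyclic}, $\langle x,y\rangle$ is cyclic if and only if $\langle x,y\rangle C/C=\langle xC,yC\rangle$ is cyclic. So $x$ and $y$ are adjacent in $\mathcal{E}(G)$ exactly when $xC$ and $yC$ are adjacent in $\mathcal{E}(\bar G)$. Before restricting to the proper graphs, I would identify the non-dominating vertices on both sides. This is the content of item 2, so I prove item 2 first and then return to item 1.

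\emph{Item 2.} Suppose $xC$ is dominating in $\mathcal{E}(\bar G)$. Then for every $y\in G$ the subgroup $\langle xC,yC\rangle$ is cyclic. By Lemma~\ref{Lemma:Cyclic-iff-cyclic}, $\langle x,y\rangle$ is then cyclic for every $y\in G$, so $x\in C$ and $xC$ is the identity. Hence $\cyc(\bar G)=1$, and the identity is the only dominating vertex of $\mathcal{E}(\bar G)$. I read the intended statement as: the vertex set of $\mathcal{E}^{**}(\bar G)$ is $\bar G\setminus\{1\}$.

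The same argument shows that $x\in G$ is a vertex of $\mathcal{E}^{**}(G)$, that is $x\notin C$, if and only if $xC\neq 1$, that is $xC$ is a vertex of $\mathcal{E}^{**}(\bar G)$. Combined with the adjacency equivalence above, this gives item 1 for vertices $x,y$ of $\mathcal{E}^{**}(G)$.

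\emph{Item 3.} Here the only real care is needed, because the quotient map $G\setminus C\to\bar G\setminus\{1\}$ is not injective.

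Suppose first that $\mathcal{E}^{**}(G)$ contains an induced claw with centre $y$ and leaves $x_1,x_2,x_3$. The leaves are pairwise non-adjacent and $y$ is adjacent to each of them. By item 1, $yC$ is adjacent to each $x_iC$ and the $x_iC$ are pairwise non-adjacent. Two elements in the same coset $xC$ are adjacent, since $\langle x,xc\rangle\le\langle x,c\rangle$ is cyclic. Therefore all four cosets are distinct: the $x_iC$ are distinct because the $x_i$ are non-adjacent, and $yC\neq x_iC$ because $x_iC$ and $x_jC$ are non-adjacent while $yC$ is adjacent to both, so $yC=x_iC$ would make $x_iC$ adjacent to $x_jC$. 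So the images form an induced claw in $\mathcal{E}^{**}(\bar G)$.

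Conversely, given an induced claw in $\mathcal{E}^{**}(\bar G)$, I choose arbitrary coset representatives. These lie outside $C$, and by item 1 they form an induced claw in $\mathcal{E}^{**}(G)$.

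The main obstacle is only the bookkeeping in item 3: checking that distinct vertices of the claw map to distinct cosets. The observation that elements of a single coset of $C$ are always adjacent settles this.
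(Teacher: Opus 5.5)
Your proof is correct and follows the paper's route: every item rests on Lemma~\ref{Lemma:Cyclic-iff-cyclic}, and your argument for item 2 is exactly the paper's. The paper calls items 1 and 3 immediate. You fill in two details it leaves implicit: item 2 is needed so that the vertex sets $G\setminus\cyc(G)$ and $G/\cyc(G)\setminus\{1\}$ correspond, and the four vertices of a claw must lie in distinct cosets of $\cyc(G)$.
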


\begin{proof}
	Let $C = \cyc(G)$. Part~(1) and Part~(3) follow immediately from 
	Lemma~\ref{Lemma:Cyclic-iff-cyclic}. For Part~(2), note that 
	$xC$ is a dominating vertex in $\mathcal{E}^{**}(G/C)$ if and only 
	if $\langle xC, yC\rangle$ is cyclic for every $y \in G$. By 
	Lemma~\ref{Lemma:Cyclic-iff-cyclic}, this holds if and only if 
	$\langle x, y\rangle$ is cyclic for every $y \in G$, that is, 
	$x \in C$, so $xC = C$ is the identity of $G/C$.
\end{proof}

\section{Proofs of Theorem \ref{solvable}, Corollary \ref{nilpotent} and Theorem \ref{pgroups}}

\begin{theorem}\label{Thm:F(G)-cyclic}
   Let $G$ be a finite solvable group whose Fitting subgroup $F=F(G)$ is
   cyclic.  If $\mathcal E^{**}(G)$ is claw-free, then $G/F$ is cyclic.
\end{theorem}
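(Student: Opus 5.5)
The plan is to argue by contradiction, using the standard structure of solvable groups with cyclic Fitting subgroup. Since $G$ is solvable, $C_G(F)\le F$, and since $F$ is abelian, $C_G(F)=F$. Hence $G/F$ embeds in $\aut(F)$, which is abelian because $F$ is cyclic. Suppose $G/F$ is not cyclic. Then for some prime $r$ the Sylow $r$-subgroup of $G/F$ is non-cyclic, so $G/F$ contains a subgroup $A\cong C_r\times C_r$.

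Write $F=\prod_p P_p$ with $P_p$ cyclic. The group $A$ acts faithfully on $F$, so its image in $\aut(F)=\prod_p\aut(P_p)$ is non-cyclic. Each $\aut(P_p)$ is cyclic, except $\aut(P_2)\cong C_2\times C_{2^{k-2}}$. So unless $r=2$ and the $2$-part does all the work, for each $p$ the projection of $A$ to $\aut(P_p)$ has cyclic image. This image is then of order at most $r$, with a nontrivial kernel $K_p\le A$. Choose $p$ so that $A$ acts nontrivially on $P_p$, and let $y$ be a generator of $P_p$. Then $y\notin Z(G)$, so $y\notin\cyc(G)$ by Lemma~\ref{Lemma:K-cyclic-enh}, and $y$ is a vertex of $\mathcal E^{**}(G)$. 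The preimage $H$ of $K_p$ centralizes $y$, so every $r$-element $x\in H$ with $r\ne p$ satisfies that $\langle x,y\rangle$ is cyclic, being generated by commuting elements of coprime orders.

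The claw will be $\{y;x_1,x_2,x_3\}$. Here $x_1,x_2,x_3$ are $r$-elements of $C_G(y)$ that are pairwise non-adjacent, meaning each $\langle x_i,x_j\rangle$ is non-cyclic. To get three of them I will use more of $A$ than $K_p$. Since $A$ is non-cyclic, there are at least two indices $p$ with nontrivial action. Choosing $y$ to be the product of generators of suitable $P_p$'s, or just working inside $C_G(y)F/F$, I aim to find inside the Sylow $r$-subgroup of $C_G(y)$ a subgroup whose image mod $F$ is non-cyclic. Alternatively, the image could be a cyclic group together with a non-central $r$-element of $F$. Then Theorem~\ref{Thm:enh-claw-free-imply-uniuq-subgroup} applied to that non-cyclic $r$-subgroup $R$ gives three pairwise non-adjacent elements. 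Concretely, I would take representatives of three distinct cyclic subgroups of order $r$ of $RF/F\cong C_r\times C_r$. Any two of them generate a subgroup whose image mod $F$ is non-cyclic, so they are pairwise non-adjacent. Each is adjacent to $y$ by coprimality. Corollary~\ref{behacyc} lets me pass freely to $G/\cyc(G)$ if that simplifies the choices.

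The main obstacle is this middle step: exhibiting a single non-dominating $y$ whose centralizer contains $r$-elements lying in three distinct cyclic subgroups of $A$. The difficulty is that $A$ acts faithfully, so no nontrivial $y$ is centralized by all of $A$. I would first handle the case where $A$ has two independent kernels $K_p\ne K_{p'}$. There I would take $y$ to generate $P_p$ and exploit that $K_p\cong C_r$ is centralizing while $r$-elements of $F$ (from $P_r$, if present) supply the extra non-cyclicity. I would then treat separately the cases $r=p$ and $r=2$, where $\aut(P_2)$ is non-cyclic, by direct inspection of small automorphism groups. If the coprimality argument fails when $r=p$, I would instead choose $y$ of order $p$ in $\Omega_1(P_p)$, on which automorphisms of $p$-power order act trivially.
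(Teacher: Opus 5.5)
Your opening reduction ($C_G(F)=F$, $G/F\hookrightarrow\aut(F)$ abelian, a subgroup $A\cong C_r\times C_r$, the kernels $K_p$) agrees with the paper. But the proposal stops exactly at the step that needs an idea, and the configuration you aim for does not exist in general. You want a non-dominating $y$ adjacent to three $r$-elements whose images lie in three distinct subgroups of order $r$ of $A$.

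Take $G=S_3\times D_{10}$, with $D_{10}$ dihedral of order $10$. Then $F=C_{15}$ and $G/F\cong C_2\times C_2$. The $2$-elements mapping onto the three non-trivial elements of $G/F$ are involutions $(t_1,1)$, $(1,u_2)$, $(t_3,u_3)$, with $t_i\in S_3$ and $u_i\in D_{10}$. Since $C_{S_3}(t_1)=\langle t_1\rangle$ and $C_{D_{10}}(u_2)=\langle u_2\rangle$, any $y$ commuting with all three lies in $\langle t_1\rangle\times\langle u_2\rangle\cong C_2\times C_2$. Then $\langle y,(t_1,1)\rangle$ and $\langle y,(1,u_2)\rangle$ are both cyclic only for $y=1$. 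Your fallback, letting $r$-elements of $P_r$ supply the non-cyclicity, is unavailable here because $P_2=1$. So no claw of your shape exists in this group, although the theorem says a claw must exist.

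Incidentally, faithfulness of $A$ does not stop $A$ from centralizing a whole $P_s$, contrary to your remark. Such a $y$ can still be dominating, for example the $C_7$ factor in $C_7\times S_3\times D_{10}$.

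The missing idea is to allow leaves in the same coset of $F$. The paper's construction is as follows:
\begin{enumerate}
\item Choose a second prime $s$ with $K_s\neq K_p$. Take $z$ of order $p$ in $P_p$, and an $r$-element $x$ whose image lies in $K_p\setminus K_s$.
\item Then $x$ centralizes $z$ but acts fixed-point-freely on the cyclic $s$-group $P_s$. So for $1\neq f\in P_s$, the elements $x$ and $x^f$ do not commute, while both are adjacent to $z$ by coprimality.
\item The third leaf is a generator $a$ of $F$. It is adjacent to $z$ but not to $x$ or $x^f$, because $C_G(F)=F$. Three $P_s$-conjugates of $x$ would also do.
\end{enumerate}

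To obtain two distinct kernels at primes $p,s\neq r$, the paper first proves that $A$ acts faithfully on $F_{r'}$. If $B=C_A(F_{r'})\neq 1$, its preimage is $F_{r'}\times Q$ with $Q$ an $r$-group normal in $G$, which forces $Q\le F$, a contradiction. This one argument also settles the cases you postpone to ``direct inspection'', namely $r=2$ with $\aut(P_2)$ non-cyclic, and $r=p$. As written, those cases remain open.
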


\begin{proof}
Since $G$ is solvable, the Fitting subgroup contains its own
centralizer. Because $F$ is abelian, this gives
$
   \CG{G}{F}=F.
$
Consequently conjugation induces a faithful embedding
$
   A:=G/F\hookrightarrow \text{Aut}(F).
$
In particular, since $F$ is cyclic, $A$ is abelian.  It therefore suffices to prove that every
Sylow subgroup of $A$ is cyclic.

Suppose, to the contrary, that a Sylow $q$-subgroup of $A$ is noncyclic for
some prime $q$.  Since $A$ is abelian, it contains a subgroup
$
   E\cong C_q\times C_q.
$
Write $F=F_q\times F_{q'}$, where $F_q$ is the Sylow $q$-subgroup of $F$.
We first claim that $E$ acts faithfully on $F_{q'}$.

Indeed, let
$ B=\CG{E}{F_{q'}}$
and suppose that $B\ne 1$.  Let $R$ be the full inverse image of $B$ in $G$.
Since $A$ is abelian, $R$ is normal in $G$.  Moreover, $F_{q'}\le Z(R)$ and
$R/F_{q'}$ is a $q$-group.  It follows that
$R=F_{q'}\times Q$
for a (necessarily unique) Sylow $q$-subgroup $Q$ of $R$.  Hence
$Q=O_{q}(R)$ is characteristic in $R$ and therefore normal in $G$. 
Hence $Q\leq F,$ in contradiction with $QF/F\cong B\ne 1$.

For every prime $p\ne q$, the $q$-subgroups of
$\text{Aut}(F_p)$ are cyclic.  Hence
every nontrivial homomorphism
$  E\longrightarrow\text{Aut}(F_p)
$
has a subgroup of order $q$ as its kernel.  Since $\CG{E}{F_{q'}}=1,$
the intersection of these
kernels, as $p$ ranges over the prime divisors of $|F_{q'}|$, is trivial.
There must therefore be two primes $p,r\ne q$ such that the corresponding
kernels $L_p$ and $L_r$ are distinct.
Choose
$1\ne \bar{x}\in L_p\setminus L_r.
$
Thus $\bar{x}$ centralizes $F_p$ and acts nontrivially on $F_r$. Choose also
$\bar{y}\in E\setminus L_p$, so that $\bar{y}$ acts nontrivially on $F_p$.
Let $z$ be the element of order $p$ in the cyclic group $F_p$. Then
$\bar{x}$ centralizes $z$, whereas $\bar{y}$ does not.  In particular,
$z\notin\cyc(G).
$

Choose a $q$-element $x\in G$ such that $xF=\bar x.$  Then $x$ centralizes
$z$ but acts nontrivially on $F_r$.  Since this is a
coprime automorphism of the cyclic $r$-group $F_r$ of order $q$, it is
fixed-point-free on $F_r$.  Choose $1\ne f\in F_r$.  Then
$
   x^f\ne x.
$
Furthermore, $x$ and $x^f$ do not commute.  Otherwise $x$ would centralize
the nonidentity element $x^{-1}x^f\in F_r$, contradicting
$C_{F_r}(x)=1$.  Thus
$\langle x,x^f\rangle$ is not cyclic

Let $a$ be a generator of $F$.  Since $z,a\in F$, the subgroup
$\langle z,a\rangle$ is cyclic.  Also, $z$ commutes with both $x$ and $x^f$;
as $|z|=p$ and $x,x^f$ are $q$-elements with $p\ne q$, both
$
   \langle z,x\rangle$
and $\langle z,x^f\rangle
$
are cyclic.
In contrast, neither $\langle a,x\rangle$ nor $\langle a,x^f\rangle$ is
cyclic.  Indeed, cyclicity would imply that $x$, respectively $x^f$,
centralizes the generator $a$ of $F$, contrary to
$C_G(F)=F$. We have proved that $\{z,a,x,x^f\}$
induces a claw with center $z$.
All four vertices belong to $G\setminus\cyc(G)$.
Indeed  $a$ is not adjacent to $x$; and $x$ and $x^f$ are not
adjacent to $a$.  We have therefore found an induced claw in
$\mathcal E^{**}(G)$, a contradiction.

It follows that $A$ contains no subgroup isomorphic to $C_q\times C_q$ for
any prime $q$.  Since $A$ is abelian, all its Sylow subgroups are cyclic.
Therefore $A=G/F$ is cyclic.
\end{proof}

\begin{lemma}\label{excludedsbgs}
	Suppose that $\cyc(G) =1$. If $\mathcal{E}^{**}(G)$ is claw-free, 
	then $G$ contains no subgroup isomorphic to a generalized quaternion 
	group or to $C_p \times C_p \times C_q$.
\end{lemma}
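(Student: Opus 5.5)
The plan is to exhibit, in each case, an explicit induced claw in $\mathcal{E}^{**}(G)$. Since $\cyc(G)=1$, the only dominating vertex of $\mathcal{E}(G)$ is the identity, so $\mathcal{E}^{**}(G)$ is the subgraph of $\mathcal{E}(G)$ induced on $G\setminus\{1\}$. It therefore suffices to find four non-identity elements $z,x_1,x_2,x_3\in G$ with the following properties:
\begin{itemize}
\item $\langle z,x_i\rangle$ is cyclic for $i=1,2,3$;
\item $\langle x_i,x_j\rangle$ is non-cyclic for $i\neq j$.
\end{itemize}
Adjacency is the condition that $\langle u,v\rangle$ is cyclic, and this depends only on the subgroup $\langle u,v\rangle$. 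Hence a claw built inside a subgroup $H\le G$ is automatically an induced claw of $\mathcal{E}(G)$. It then lies in $\mathcal{E}^{**}(G)$ provided it avoids $1$.

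\emph{Generalized quaternion case.} Every generalized quaternion group of order $2^k$ with $k\ge 3$ contains a copy of $Q_8$, so we may work inside some $Q_8=\langle i,j\rangle\le G$. Take $x_1=i$, $x_2=j$, $x_3=k$ and $z=i^2=j^2=k^2$, the unique involution of this $Q_8$. Then:
\begin{itemize}
\item $z\in\langle x_t\rangle$ for each $t$, so $\langle z,x_t\rangle=\langle x_t\rangle$ is cyclic;
\item any two of $i,j,k$ generate all of $Q_8$, which is not cyclic.
\end{itemize}
None of the four elements is the identity, so this is an induced claw in $\mathcal{E}^{**}(G)$. This is exactly the claw used in the proof of Theorem~\ref{Thm:enh-claw-free-imply-uniuq-subgroup}, except that its centre $z\neq 1$ now survives the deletion.

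\emph{Case $C_p\times C_p\times C_q$ with $p\neq q$.} Let $H=\langle a,b\rangle\times\langle c\rangle$ with $|a|=|b|=p$ and $|c|=q$. Choose $a_1=a$, $a_2=b$, $a_3=ab$, which generate three distinct subgroups of order $p$. Put $z=c$ and $x_t=a_tc$. Then:
\begin{itemize}
\item $\langle z,x_t\rangle=\langle a_t,c\rangle\cong C_{pq}$ is cyclic.
\item For $s\neq t$, $\langle x_s,x_t\rangle$ contains $x_s^{\,q}$ and $x_t^{\,q}$. These generate $\langle a_s\rangle$ and $\langle a_t\rangle$ respectively, because $q$ is prime to $p$. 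Hence $\langle x_s,x_t\rangle\ge\langle a_s,a_t\rangle\cong C_p\times C_p$, so it is not cyclic.
\end{itemize}
All four elements are non-trivial, which gives the required claw.

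The argument is entirely constructive; the only point needing care is the reading of the statement, namely that $q$ is a prime different from $p$. If $q=p$ were allowed, $C_p^3$ would need a separate treatment. There every non-identity element has order $p$, so two non-identity elements are adjacent only when they generate the same subgroup. Such a graph is a disjoint union of cliques, hence claw-free. So the lemma must be meant with $p\neq q$, and I would state this explicitly at the start of the proof.
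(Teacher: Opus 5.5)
Your proof is correct and is essentially the paper's argument: the same $Q_8$ claw centred at $i^2=j^2=k^2$, and for $C_p\times C_p\times C_q$ a claw centred at an element $c$ of order $q$. The one difference is that the paper takes the leaves to be the order-$p$ elements $a_t$ themselves, so $\langle a_s,a_t\rangle\cong C_p\times C_p$ directly, which avoids the $q$-th power step needed for $x_t=a_tc$. Your remark that $p\neq q$ must be assumed is right, since $G=C_p^3$ has $\cyc(G)=1$ and claw-free $\mathcal{E}^{**}(G)$, consistent with Theorem~\ref{pgroups}(i); the paper uses this assumption tacitly.
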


\begin{proof}
	Suppose $G$ has a subgroup isomorphic to $C_p\times C_p\times C_q.$ Then $G$ contains three distinct cyclic subgroups $\langle a_1\rangle, \langle a_2\rangle, \langle a_3\rangle$ of order $p,$ and a cyclic subgroup $\langle b\rangle$ of order $q.$ Since $\cyc(G)=1,$ the elements $a_1, a_2, a_3, b$ are not dominating vertices and induce a claw with center $b$ and leaves  $a_1, a_2, a_3,$ a contradiction. Similarly,  if $G$ has a subgroup isomorphic to a generalized quaternion group, then $\mathcal E^{**}(G)$ contains a claw $a_1,a_2,a_3,b$ in which the leaves $a_1,a_2,a_3$ have order 4 and $b=a_1^2=a_2^2=a_3^2.$
\end{proof}

\begin{lemma}\label{frobcent}
	Suppose that a finite group $G$ contains an element $x$ and a subgroup $P$ 
	with the following properties:
	\begin{enumerate}
		\item $|x|=q$ and $P$ is a $p$-group, with $q$ and $p$ distinct primes.
		\item $x\in N_G(P)\setminus C_G(P).$
		\item $C_{Z(P)}(x)\neq 1$.
	\end{enumerate}
	Then $\mathcal{E}(G)$ contains a claw in which the central vertex has order $p$ 
	and the leaves have order $q.$
\end{lemma}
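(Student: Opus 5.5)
We build the claw directly. By (3) we may choose $z\in C_{Z(P)}(x)$ with $|z|=p$; this $z$ will be the central vertex. By (2), $x$ normalizes $P$ but does not centralize it, so some $g\in P$ satisfies $x^g\neq x$, and $x^g$ still normalizes $P$. The leaves will be three distinct conjugates $x_1=x$, $x_2=x^{g}$, $x_3=x^{h}$ with $g,h\in P$, each generating a different subgroup of order $q$.

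Adjacency to the centre is immediate. Since $z\in Z(P)$, conjugating by elements of $P$ fixes $z$. Hence from $[x,z]=1$ we get $[x^g,z]=[x,z]^g=1$. An element of order $p$ commuting with an element of order $q$, with $p\ne q$, generates a cyclic group of order $pq$. So $z$ is adjacent to every $x^g$, $g\in P$.

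The main step is to find three such conjugates that are pairwise non-adjacent, i.e.\ with $\langle x^g,x^h\rangle$ non-cyclic. Two elements of order $q$ generate a cyclic group exactly when they generate the same subgroup of order $q$. So we need three distinct subgroups among the $\langle x^g\rangle$, $g\in P$.
\begin{itemize}
\item Consider the action of $P$ by conjugation on the set $\{\langle x^g\rangle : g\in P\}$.
\item The stabilizer of $\langle x\rangle$ is $N_P(\langle x\rangle)$, which is proper: if $P$ normalized $\langle x\rangle$, then $[P,x]\le P\cap\langle x\rangle=1$, so $x$ would centralize $P$, contrary to (2).
\item The orbit therefore has size $|P:N_P(\langle x\rangle)|$, a nontrivial power of $p$.
\end{itemize}
If $p\ge3$ the orbit has at least $3$ elements and we are done.

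The obstacle is $p=2$ with orbit size exactly $2$. In that case we would pass to a subgroup. Here $N_P(\langle x\rangle)=C_P(x)$, since $p\nmid |\mathrm{Aut}(C_q)|$ when the orbit is a $2$-power and $N/C$ embeds in $C_{q-1}$. This needs care, and there is an alternative route. Write $P\langle x\rangle$, where $\langle x\rangle$ acts on the $p$-group $P$, and use coprime action: $P=[P,x]C_P(x)$ with $[P,x]\ne1$. The conjugates of $x$ by $[P,x]$ are $x\cdot[x,u]$, and the number of Sylow $q$-subgroups of $P\langle x\rangle$ equals $|P:C_P(x)|$. A Sylow count gives this number $\equiv1\pmod q$, so it is at least $q+1\ge3$, since it is $>1$. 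This settles all cases uniformly, so I would use this argument in place of the orbit-parity discussion.

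Concretely: $H=P\langle x\rangle$ is a group because $x$ normalizes $P$, and $\langle x\rangle$ is a Sylow $q$-subgroup of $H$. Its Sylow $q$-subgroups are the $\langle x^g\rangle$, $g\in P$. Their number $n_q=|H:N_H(\langle x\rangle)|$ is $\equiv1\pmod q$ and $\ne1$ by the previous bullet point, so $n_q\ge q+1\ge3$. Choosing three of them gives the leaves, and together with $z$ they induce a claw in $\mathcal E(G)$, with centre of order $p$ and leaves of order $q$.
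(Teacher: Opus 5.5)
Your final argument is correct and is essentially the paper's proof. You take $z\in C_{Z(P)}(x)$ of order $p$, observe that the Sylow $q$-subgroups of $P\langle x\rangle$ are the $\langle x^g\rangle$ with $g\in P$ and that there are at least three of them, and note that each $x^g$ commutes with $z$ because $z\in Z(P)$. You also make explicit the count ``at least three'', which the paper leaves implicit, via $n_q\equiv 1 \pmod q$ and $n_q\neq 1$ (otherwise $[P,x]\le P\cap\langle x\rangle=1$).

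The orbit-parity digression can simply be deleted. Its justification via $p\nmid|\mathrm{Aut}(C_q)|$ is not valid, since for $p=2$ and $q$ odd one always has $2\mid q-1$. The equality $N_P(\langle x\rangle)=C_P(x)$ does hold, but by the same commutator argument.
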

\begin{proof}
	Let $y \in Z(P)$ of order $p$ such that $[x, y] = 1$.
	The group $P\langle x \rangle$ has more than one Sylow $q$-subgroup; in fact 
	it has at least three,
	$\langle x \rangle, \langle x^{y_1} \rangle, \langle x^{y_2} \rangle$
	for suitable $y_1, y_2 \in P$.
	Since $y_1, y_2 \in C_P(y)$, each of $x^{y_1}$ and $x^{y_2}$ 
	commutes with $y$.
	As $x, x^{y_1}, x^{y_2}$ generate pairwise 
	distinct Sylow $q$-subgroups of $P\langle x \rangle$, no two of 
	$x, x^{y_1}, x^{y_2}$ commute; hence no two of them generate a cyclic 
	subgroup, i.e., they are pairwise non-adjacent in $\mathcal{E}(G)$. 
	On the other hand, each of $x, x^{y_1}, x^{y_2}$ has order $q$ coprime 
	to $|y| = p$ and commutes with $y$, so $\langle x^{y_i}, y \rangle$ 
	is cyclic of order $pq$; thus each is adjacent to $y$.
\end{proof}

\begin{proposition}\label{solv-2-Frb-G-trival-dom}
	Let $G$ be a finite solvable group such that $\mathcal{E}^{**}(G)$ 
	is claw-free, $\cyc(G) = 1$, and the Fitting subgroup $F(G)$ 
	is not cyclic. Then the following hold:
	\begin{enumerate}
		\item $F(G)$ is a non-cyclic $p$-group.
		\item If $G$ is not nilpotent and $F_2(G)$ denotes the preimage 
		in $G$ of $F(G/F(G))$, then $F_2(G)$ is a Frobenius group with 
		Frobenius kernel $F(G)$, and $F_2(G)/F(G)$ is cyclic.
		\item If $G \neq F_2(G)$, then $G/F_2(G)$ is a $p$-group and 
		$G$ is a $2$-Frobenius group.
	\end{enumerate}
\end{proposition}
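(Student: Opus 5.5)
The plan is to use the two forbidden configurations already available, Lemma~\ref{excludedsbgs} and Lemma~\ref{frobcent}, together with the standard facts on solvable groups: $C_G(F(G))\le F(G)$ and $C_{G/F}(F_2/F)\le F_2/F$. Write $F=F(G)$ and $F_2=F_2(G)$. Since $\cyc(G)=1$, the only dominating vertex is $1$. Hence any claw of $\mathcal E(G)$ avoiding the identity is a claw of $\mathcal E^{**}(G)$. In particular, the claws produced by Lemma~\ref{frobcent}, whose vertices all have prime order, are forbidden.

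\emph{Step (1).} Since $F$ is nilpotent and non-cyclic, some Sylow subgroup $F_p$ is non-cyclic. By Lemma~\ref{excludedsbgs}, $F_p$ is not generalized quaternion, so $F_p$ contains a subgroup $C_p\times C_p$. If another prime $q$ divided $|F|$, an element of order $q$ in $F_q$ would centralize this subgroup. This would give $C_p\times C_p\times C_q\le G$, which Lemma~\ref{excludedsbgs} excludes. Hence $F=F_p$ is a non-cyclic $p$-group.

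\emph{Step (2).} Since $F=O_p(G)$, we have $O_p(G/F)=1$, so $F_2/F$ is a nilpotent $p'$-group. By Schur--Zassenhaus, $F_2=F\rtimes H$ with $H$ a nilpotent $p'$-group. The key claim is that every element $x\in G$ of prime order $q\ne p$ satisfies $C_F(x)=1$.

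First, $x\notin C_G(F)$ because $C_G(F)\le F$. So Lemma~\ref{frobcent}, applied with $P=F$, forces $C_{Z(F)}(x)=1$.

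Now suppose $1\ne y\in C_F(x)$ has order $p$. Then $Z(F)\le C_F(y)$. Since $C_{Z(F)}(x)=1$ and $Z(F)\neq 1$, the element $x$ does not centralize $C_F(y)$. In the group $C_F(y)\langle x\rangle$ the Sylow $q$-subgroup $\langle x\rangle$ is therefore not normal, so it has at least $q+1\ge 3$ conjugates $\langle x^{g}\rangle$ with $g\in C_F(y)$. These conjugates are pairwise non-commuting, since the Sylow $q$-subgroups there have order $q$. Each of them commutes with $y$, so we obtain a claw centred at $y$, a contradiction.

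Consequently, $H$ acts fixed-point-freely on $F$ and $F_2$ is Frobenius with kernel $F$. The complement $H$ has all Sylow subgroups cyclic or generalized quaternion, and the latter are excluded by Lemma~\ref{excludedsbgs}. Since $H$ is nilpotent, it is cyclic.

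\emph{Step (3).} Since $C_{G/F}(F_2/F)\le F_2/F$ and $F_2/F\cong H$ is cyclic, $G/F_2$ embeds in $\aut(H)$ and is abelian. Suppose some prime $r\ne p$ divides $|G/F_2|$. Take an $r$-element $g\notin F_2$; we may choose $g$ so that $g^r\in F_2$.

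By the claim of Step~(2), every element of prime order coprime to $p$ in $F\langle g\rangle H$ acts fixed-point-freely on $F$. Hence $F\langle g,H\rangle$ is a Frobenius group with kernel $F$ and a complement $K$ that contains $H$ as a proper subgroup. The group $K$ is supersolvable with cyclic Sylow subgroups, again by Lemma~\ref{excludedsbgs}. The aim is to show that $K$ is nilpotent, so that the image of $F\langle g, H\rangle$ in $G/F$ is a nilpotent normal-ish subgroup strictly larger than $F_2/F$, contradicting the definition of $F_2$.

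The main obstacle is exactly this nilpotency of $K$: a Frobenius complement with cyclic Sylow subgroups can be metacyclic and non-abelian. To handle it, I would first try to produce a claw directly. Take an element $h$ of prime order in $H$ on which $g$ acts nontrivially. The elements $h$, $g$ and two $H$-conjugates of $g$ all act fixed-point-freely on $F$, and I would look for a claw among them, possibly centred at an element of $F$ or at $h$, using Lemma~\ref{frobcent} with $P$ a Sylow subgroup of $H$.

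Once $G/F_2$ is shown to be a $p$-group, the remaining points are routine. Its elements act on $H$ fixed-point-freely, because a coprime nontrivial action on a cyclic group of prime-power order has no nontrivial fixed points. Thus $G/F$ is Frobenius with kernel $F_2/F$, and together with Step~(2) this shows that $G$ is a $2$-Frobenius group.
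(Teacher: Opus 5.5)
\textbf{Gap: Step (3) never proves that $G/F_2$ is a $p$-group.} Your Steps (1) and (2) are correct. The key claim of Step (2) is Lemma~\ref{frobcent} applied to $P=C_F(y)$, which is the mechanism the paper uses in its Step~4. In Step (3) you correctly reduce the problem to the nilpotency of the Frobenius complement $K\cong F_2\langle g\rangle/F$, but then only announce a search for a claw. One of your suggested centres cannot work: by your own Step (2) claim, no nonidentity element of $F$ commutes with a nonidentity $p'$-element.

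The missing idea is structural:
\begin{enumerate}
\item Since $K$ has cyclic Sylow subgroups, $K=\langle a\rangle\rtimes\langle b\rangle$ with $\gcd(|a|,|b|)=1$. In a Frobenius complement every subgroup of order $rs$ ($r,s$ primes) is cyclic.
\item Let $c\in\langle b\rangle$ have prime order $r$. For each subgroup $\langle a_s\rangle$ of prime order of $\langle a\rangle$ (normal in $K$), the group $\langle a_s,c\rangle$ is therefore cyclic. A coprime automorphism of a cyclic group that fixes all its elements of prime order is trivial, so $c$ centralizes $\langle a\rangle$ and $c\in Z(K)$.
\item If $K$ were not nilpotent, some Sylow $r$-subgroup $R$ with $r$ dividing $|b|$ would be non-normal. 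Then $|R|\geq r^2$, and $R$ has at least three conjugates $R_i=\langle b_i\rangle$, all containing the central subgroup $\langle c\rangle$.
\item So $\{c,b_1,b_2,b_3\}$ is a claw centred at $c$: a cyclic subgroup containing $b_i$ and $b_j$ would be an $r$-subgroup of $K$ containing two distinct Sylow $r$-subgroups. This is the generalized-quaternion pattern of Lemma~\ref{excludedsbgs}.
\end{enumerate}

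\textbf{How the paper argues instead.} The paper avoids $K$ altogether. Since $G/F_2$ is abelian, $G/F=(N/F)(X/F)$ with $N/F$ a normal Hall $p'$-subgroup and $X/F$ a Sylow $p$-subgroup. Lemma~\ref{frobcent}, applied to $C_{\langle y\rangle Z(F)}(y)$, shows that no element $y$ of order $p$, inside or outside $F$, commutes with an element of prime order $q\neq p$. Hence $X/F$ acts fixed-point-freely on $N/F$, Thompson's theorem makes $N/F$ nilpotent, and $N=F_2$. This route needs $X\neq F$. If $G/F$ is a $p'$-group, the Thompson argument is unavailable. An example is the Frobenius group $G=V\rtimes K$ with $K=C_7\rtimes C_9$, where $C_9$ acts on $C_7$ through a quotient of order $3$, $V$ is an elementary abelian $p$-group with $p\notin\{3,7\}$, and $G/F_2\cong C_3$. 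In that case the nilpotency of $K$ that you isolated is exactly what must be proved, and the claw above does it.

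\textbf{Second gap: the closing ``routine'' step.} As justified, it is wrong. $F_2/F$ is cyclic but need not have prime-power order. A faithful coprime action on a cyclic group of composite order can have fixed points, for example an involution inverting the $C_3$-factor of $C_{15}$ and fixing the $C_5$-factor. To conclude that $G/F$ is Frobenius with kernel $F_2/F$, you need the paper's statement about elements of order $p$ outside $F$; your Step (2) claim only covers $y\in F$.
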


\begin{proof}
	Set $F = F(G)$, $F_2 = F_2(G)$. We assume that $F$ is not cyclic.
	It follows immediately from Lemma~\ref{excludedsbgs} that $F$ is a $p$-group
	which is neither cyclic nor generalized quaternion.
	
	\medskip
	\noindent{\bf{Step 1}}: For every $H \le G$ with $\gcd(|H|, p) = 1$, 
	$Z(F)H$ is a Frobenius group with kernel $Z(F)$.
	
Suppose not. Then there exist elements $x \in H$ of prime order $q$ 
with $C_{Z(F)}(x)\neq 1.$ Since $F$ is neither cyclic nor generalized 
quaternion, it contains a subgroup isomorphic to $C_p \times C_p$. 
If $[x,F]=1$, then $C_p \times C_p \times C_q \leq F\langle x\rangle 
\leq G$, contradicting Lemma~\ref{excludedsbgs}. If $[x,F]\neq 1$, 
then by Lemma~\ref{frobcent}, four non-trivial elements of 
$F\langle x\rangle$ would induce a claw in $\mathcal{E}(G)$.
Since $\cyc(G) = 1$, none of these elements is a dominating 
vertex, so this configuration survives in $\mathcal{E}^{**}(G)$ and
induces a claw in $\mathcal{E}^{**}(G)$.

	\medskip
	\noindent{\bf{Step 2}}: For every prime $q \neq p$, a Sylow $q$-subgroup 
	of $G$ is cyclic.
	
	Let $Q$ be a Sylow $q$-subgroup of $G$, $q \neq p$. By Step~1, $Z(F)Q$ is 
	a Frobenius group with kernel $Z(F)$ and complement $Q$. Hence $Q$ is cyclic or generalized quaternion  \cite[V 8.7 Hauptsatz]{Endliche-gruppen-I-Huppert}. The 
	second possibility is excluded by Lemma~\ref{excludedsbgs}.
	
	\medskip
	\noindent{\bf{Step 3}}: $F_2/F$ is cyclic, and $G/F_2$ is abelian.
	
	Since $F_2/F = F(G/F)$ is nilpotent, it is the direct product of its 
	Sylow subgroups. The $p$-part of $F_2/F$ is trivial, since $F$ already 
	contains all normal $p$-subgroups of $G$. For each prime $q \neq p$ 
	dividing $|F_2/F|$, the corresponding Sylow $q$-subgroup of $F_2/F$ 
	corresponds under the natural projection to a Sylow $q$-subgroup of $G$, 
	which is cyclic by Step~2. Hence $F_2/F$ is a direct product of cyclic 
	groups of pairwise coprime orders, and therefore cyclic.
	
	Since $F_2/F$ is cyclic, $\operatorname{Aut}(F_2/F)$ is abelian. Moreover, 
	$G/F_2$ acts on $F_2/F$ by conjugation with trivial kernel, since 
	$F_2/F = F(G/F)$ is self-centralizing in the solvable group $G/F$ 
\cite[III 4.2 Satz]{Endliche-gruppen-I-Huppert}. This gives an embedding
$
	G/F_2 \;\hookrightarrow\; \operatorname{Aut}(F_2/F),
$
	hence $G/F_2$ is abelian.
	
	\medskip
	\noindent{\bf{Step 4}}: If $F_2 \neq G$, then $G/F_2$ is a 
	$p$-group and $G$ is a $2$-Frobenius group.
	
	We may decompose $G/F$ as a semidirect product $(N/F)(X/F)$, 
	where $N/F$ is a Hall $p'$-subgroup of $G/F$ and $X/F$ is a Sylow 
	$p$-subgroup of $G/F$.
	
	We claim that $X/F$ acts fixed-point-freely on $N/F$. Suppose not. 
	Then $G$ contains two commuting elements $x$ and $y$ of orders $q$ and $p$ respectively,  with
	$q \neq p$. Since, by Step~1, $C_{Z(F)}(x)=1,$ it follows that $y\notin Z(F).$ Let
	$Y=\langle y\rangle Z(F)$ and $\tilde Y=C_Y(y)$. Since $Z(F)\neq 1,$ $y\notin Z(F)$ has order $p$ and $\tilde Y$ is not a generalized quaternion group, we have $C_p\times C_p\leq \tilde Y$.
	In particular,
	$[x,\tilde Y]\neq 1$ (otherwise $C_p\times C_p\times C_q\leq G$). It follows from Lemma \ref{frobcent} that, since  $C_{\tilde Y}(x)\neq 1,$  $\mathcal E^{**}(G)$ contains a claw, a contradiction. 

	Hence $G/F$ is a Frobenius group with kernel $N/F$ and complement $X/F$. 
	By Thompson's theorem \cite[V 8.7 Hauptsatz]{Endliche-gruppen-I-Huppert}, the kernel of a Frobenius group is nilpotent, so 
	$N/F$ is a nilpotent normal subgroup of $G/F$. In particular $N \leq F_2$, 
	and since $N/F$ is the full Hall $p'$-subgroup of $G/F$ and $F_2/F$ 
	contains no $p$-part, we conclude $N = F_2$.
	Therefore $G/F_2 \cong X/F$ is a $p$-group. Finally, $G$ is a 
	$2$-Frobenius group: $F$ is the Frobenius kernel of $F_2$, and $F_2/F$ is 
	the Frobenius kernel of $G/F$.
	This completes the proof.
\end{proof}

\begin{proposition}\label{prop:FH-p-group}
	Let $G$ be a finite group and assume that $\mathcal{E}^{**}(G)$ is claw-free. 
	If $F(G)$ is not cyclic, then $F(G/\cyc(G))=F(G)/\cyc(G)$ is not cyclic.
\end{proposition}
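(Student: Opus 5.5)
The statement has two parts, and neither seems to need the claw-free hypothesis: the key inputs are Lemma~\ref{Lemma:K-cyclic-enh} (that $C:=\cyc(G)$ is a cyclic subgroup of $Z(G)$) and Lemma~\ref{Lemma:Cyclic-iff-cyclic}. Write $F=F(G)$. I first establish the equality $F(G/C)=F/C$, and then show that $F/C$ cannot be cyclic when $F$ is not.

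For the equality, note that $C\le Z(G)$ and $Z(G)$ is a nilpotent normal subgroup, so $C\le Z(G)\le F$. Hence $F/C$ is a nilpotent normal subgroup of $G/C$, which gives $F/C\le F(G/C)$. For the reverse inclusion, let $M/C=F(G/C)$. Then $M$ is normal in $G$, and $M/C$ is nilpotent with $C\le Z(M)$. A group that is nilpotent modulo a central subgroup is itself nilpotent: one can lift an upper central series of $M/C$, or note that $C\le Z(M)$ gives $Z_{i+1}(M)\ge$ the preimage of $Z_i(M/C)$. So $M$ is a nilpotent normal subgroup of $G$, hence $M\le F$ and $M/C\le F/C$.

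For non-cyclicity, suppose for contradiction that $F/C$ is cyclic, say $F/C=\langle fC\rangle$. Since $C$ is cyclic, write $C=\langle c\rangle$. Then $F=\langle f\rangle C=\langle f,c\rangle$ is generated by two elements. Its image $\langle f,c\rangle C/C=F/C$ is cyclic, so Lemma~\ref{Lemma:Cyclic-iff-cyclic} applied to the pair $x=f$, $y=c$ shows that $\langle f,c\rangle=F$ is cyclic. This contradicts the hypothesis, so $F/C$ is not cyclic.

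I do not expect a real obstacle here. The only point needing care is the standard fact that a central extension of a nilpotent group is nilpotent, which ensures that $F(G/C)$ does not exceed $F/C$. The rest reduces directly to Lemma~\ref{Lemma:Cyclic-iff-cyclic} once one observes that $F$ is $2$-generated by a lift of a generator of $F/C$ together with a generator of the cyclic group $\cyc(G)$.
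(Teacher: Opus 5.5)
Your proof is correct, but it takes a different route from the paper's.

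The paper argues one prime at a time. It takes a non-cyclic Sylow $p$-subgroup $P$ of $F$. If $P\cap\cyc(G)=1$, then $P\cyc(G)/\cyc(G)\cong P$ is non-cyclic. Otherwise $p$ divides $|\cyc(G)|$, so $P$ is generalized quaternion with $P\cap\cyc(G)=Z(P)$ of order $2$, and $P/Z(P)$ is non-cyclic. At that step the paper cites Lemma~\ref{excludedsbgs}, although that lemma assumes $\cyc(G)=1$; what is really being used is the Sylow description of $\cyc(G)$ in Lemma~\ref{Lemma:K-cyclic-enh}.

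You instead note that if $F/\cyc(G)=\langle f\cyc(G)\rangle$, then $F=\langle f,c\rangle$ with $c$ a dominating vertex. So $F$ is cyclic directly from the definition of the cyclicizer, or equivalently from Lemma~\ref{Lemma:Cyclic-iff-cyclic}.

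Your argument gains several things over the paper's:
\begin{itemize}
\item It avoids the case split and any facts about quaternion groups.
\item It shows that the claw-free hypothesis is not needed.
\item It proves more generally that $H/\cyc(G)$ cyclic forces $H$ cyclic for every $H$ with $\cyc(G)\le H\le G$. Applied to $H=P\cyc(G)$, this even recovers the paper's finer information that the non-cyclicity of $F/\cyc(G)$ occurs at the same prime $p$.
\item You justify $F(G/\cyc(G))=F/\cyc(G)$ (a central extension of a nilpotent group is nilpotent), which the paper simply asserts from $\cyc(G)\le Z(G)$.
\end{itemize}
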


\begin{proof}
	Let $F=F(G)$ and $C=\cyc(G).$ Since $C\leq Z(G),$ we have $F(G/C)=F/C.$
	Suppose that $F$ is non-cyclic. Then, for a suitable prime $p$, a Sylow $p$-subgroup $P$ of $F$ is non-cyclic.
	If $P\cap C=1$, then $PC/C \cong P$ is a non-cyclic subgroup of $F/C.$
	So assume $P\cap C\neq 1.$ By Lemma~\ref{excludedsbgs}, $P$ is a generalized quaternion group, and $P\cap C=Z(P)$ is cyclic of order $2$. In particular, $P/(P\cap C)$ is not cyclic, and hence $PC/C$ is a non-cyclic subgroup of $F/C.$
	In both cases, $F/C$ is non-cyclic.
\end{proof}

\begin{proof}[Proof of Theorem~\ref{solvable}]
	By Corollary~\ref{behacyc}, $\mathcal{E}^{**}(G/\cyc(G))$ is claw-free and has no non-trivial dominating elements. If $F(G)$ is cyclic, then by Theorem \ref{Thm:F(G)-cyclic}, $G/F(G)$ is also cyclic. Moreover, by Proposition~\ref{prop:FH-p-group}, if $F(G)$ is non-cyclic, then $F(G/\cyc(G))$ is also non-cyclic. So the conclusion follows from Proposition~\ref{solv-2-Frb-G-trival-dom}.
\end{proof}

	\begin{proof}[Proof of Corollary~\ref{nilpotent}]
		Assume that $G$ is a non-cyclic nilpotent group and that $\mathcal{E}^{**}(G)$ is claw-free. Set $C=\cyc(G).$ By Theorem~\ref{solvable}, $F(G)/C = F(G/C)$ is a non-cyclic $p$-group. Since $G$ is nilpotent, $F(G)=G$, and since $C$ is cyclic, it follows that $G=P \times Z,$ where $P$ is a non-cyclic $p$-group and $Z$ is a cyclic $p$-complement in $C$.
		
		To conclude, it suffices to prove that if $G=P\times Z,$ where $P$ is a non-cyclic $p$-group and $Z$ is a cyclic group with $\gcd(p,|Z|)=1,$ then $\mathcal{E}^{**}(G)$ is claw-free if and only if $\mathcal{E}^{**}(P)$ is claw-free. For this purpose, notice that $\cyc(P\times Z)=\cyc(P)\times Z.$ Hence $G/C\cong P/\cyc(P)$ and, by Corollary~\ref{behacyc}, the following are equivalent: $\mathcal{E}^{**}(G)$ is claw-free; $\mathcal{E}^{**}(G/C)$ is claw-free; $\mathcal{E}^{**}(P/\cyc(P))$ is claw-free; $\mathcal{E}^{**}(P)$ is claw-free.
	\end{proof}

\begin{proof}[Proof of Theorem~\ref{pgroups}]
	First assume that $G$ is a generalized quaternion group. Then $\cyc(G)=Z(G)$ and $G/Z(G)$ is a dihedral group. It can be easily seen that the proper enhanced power graph of a dihedral group is claw-free, so $\mathcal{E}^{**}(G)$ is claw-free. Hence we may assume that $G$ is neither cyclic nor generalized quaternion. But then
	$\mathcal{E}^{**}(G)$ is claw-free if and only if the reduced power graph $\mathcal P^*(G)$ is claw-free, and  \cite[Proposition 2.5]{claw-free-pwr-graph} implies that $G$ is one of the groups listed in (i)--(v).
\end{proof}

\section{Non solvable groups}



\begin{lemma}\label{Thm:non-solvable-claw-free-Z(G)=e}
	Let $G$ be a finite non-solvable group and suppose that the identity is the unique dominating vertex of $\mathcal{E}(G)$. If $\mathcal{E}^{**}(G)$ is claw-free, then $Z(G)=1.$
\end{lemma}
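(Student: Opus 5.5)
The plan is to argue by contradiction: assuming $Z(G)\neq 1$, we build an induced claw in $\mathcal{E}^{**}(G)$ whose centre is a central element of prime order and whose leaves are $q$-elements for a well-chosen prime $q$.

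Suppose $Z(G)\neq 1$ and choose $z\in Z(G)$ of prime order $p$. The identity is the only dominating vertex of $\mathcal{E}(G)$, so $\cyc(G)=1$. Hence every non-identity element of $G$ is a vertex of $\mathcal{E}^{**}(G)$, and in particular $z$ is.

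\textbf{Choice of the prime $q$.} Since $G$ is non-solvable, it has a non-abelian simple section: there are subgroups $K\trianglelefteq H\le G$ with $S=H/K$ non-abelian simple. By Burnside's $p^aq^b$-theorem, $|S|$ has at least three prime divisors, so we may fix a prime $q\neq p$ dividing $|S|$.

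\textbf{Three distinct subgroups of order $q$ in $S$.} Because $S$ is simple and non-abelian, a subgroup of order $q$ cannot be normal in $S$, so $S$ has more than one such subgroup. By Frobenius' theorem, the number of subgroups of order $q$ is congruent to $1$ modulo $q$. It is therefore at least $q+1\ge 3$. Pick three distinct ones, $\langle \bar x_1\rangle,\langle \bar x_2\rangle,\langle \bar x_3\rangle$.

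\textbf{Lifting to $G$.} Lift each $\bar x_i$ to an element of $H$, and replace it by a suitable power so that it becomes a $q$-element $x_i\in H$ with $x_iK$ still generating $\langle\bar x_i\rangle$. This is possible because $\bar x_i$ has order $q$: the $q$-part of any preimage maps to a generator of $\langle\bar x_i\rangle$.

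\textbf{The claw.} We check that $\{z,x_1,x_2,x_3\}$ induces a claw with centre $z$.
\begin{enumerate}
\item The leaves are pairwise non-adjacent. If $\langle x_i,x_j\rangle$ were cyclic for some $i\neq j$, its image in $S$ would be a cyclic group containing two distinct subgroups of order $q$. A cyclic group has at most one subgroup of each order, so this is impossible.
\item The centre is adjacent to every leaf. Each $x_i$ commutes with the central element $z$, and $|x_i|$ is a power of $q$ while $|z|=p$ with $p\neq q$. Hence $\langle z,x_i\rangle\cong\langle z\rangle\times\langle x_i\rangle$ is cyclic.
\item All four vertices lie in $\mathcal{E}^{**}(G)$. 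They are non-identity elements, and $\cyc(G)=1$.
\end{enumerate}
This contradicts the assumption that $\mathcal{E}^{**}(G)$ is claw-free, so $Z(G)=1$.

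The main obstacle is producing three pairwise non-adjacent $q$-elements with $q\neq p$. Arguing through uniqueness of order-$q$ subgroups inside $G$ itself gets messy. Passing to a simple section avoids this: there, non-normality plus the Frobenius congruence forces at least three subgroups of order $q$, and non-cyclicity of the images already certifies non-adjacency in $G$.
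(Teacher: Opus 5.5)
Your proof is correct, and it reaches the contradiction by a different route from the paper's. Both arguments build the same kind of claw: the centre is a central element $z$ of prime order $p$, and the leaves are three $q$-elements, $q\neq p$, generating distinct subgroups. They differ in how the leaves are found.

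\textbf{The paper's route.} It works with Sylow theory inside $G$ itself. For every $q\neq p$ it first shows that a Sylow $q$-subgroup $Q$ is cyclic: otherwise $Q$ is generalized quaternion or contains $C_q\times C_q$, and both cases are excluded by Lemma~\ref{excludedsbgs}, the second via $C_p\times C_q\times C_q\le Z(G)Q$. It then shows $Q$ is normal, since otherwise three Sylow $q$-subgroups give the claw. So $G$ has a normal nilpotent $p$-complement and would be solvable.

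\textbf{Your route.} You use non-solvability at the start to obtain a non-abelian simple section $H/K$. You locate three subgroups of order $q$ there, lift to $q$-elements, and certify non-adjacency of the lifts in the quotient. This gives a shorter and uniform argument. There is no case split on the Sylow structure, no appeal to Lemma~\ref{excludedsbgs}, and no use of the characterization of $p$-groups with a unique subgroup of order $p$. Non-cyclic Sylow $q$-subgroups simply never need separate treatment.

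\textbf{What the paper's route gives in addition.} Its argument never uses non-solvability until the final line. So it proves more: any group with $\cyc(G)=1$, claw-free $\mathcal{E}^{**}(G)$ and $p\mid |Z(G)|$ has a normal cyclic $p$-complement.

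\textbf{Two small simplifications.} Burnside's $p^aq^b$-theorem is more than you need; it suffices that $|S|$ is not a power of $p$, which holds because $p$-groups are nilpotent. Also, instead of Frobenius' congruence, you can note that a non-normal subgroup of a simple group has at least three conjugates, since a normalizer of index $2$ would be normal.
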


\begin{proof}
	Assume, by contradiction, that $Z(G)\neq 1$ and let $p$ be a prime divisor of $|Z(G)|.$ If $q\neq p$, and $Q$ is a Sylow $q$-subgroup of $G$, then, by Lemma~\ref{excludedsbgs}, $Q$ cannot be generalized quaternion. If $Q$ is not cyclic, then $C_q \times C_q\leq Q$, and consequently $C_p \times C_q \times C_q\leq Z(G)Q,$ in contradiction with Lemma~\ref{excludedsbgs}.
	So $Q$ is cyclic. Assume that $Q$ is not normal. Then there exist at least three distinct Sylow $q$-subgroups $\langle y_1\rangle, \langle y_2\rangle, \langle y_3\rangle$ in $G$. But then, if $x$ is an element of order $p$ in $Z(G)$, the elements $x, y_1, y_2, y_3$ induce a claw with center $x$ and leaves $y_1, y_2, y_3$ in $\mathcal{E}^{**}(G)$, a contradiction.
	We have thus proved that for every $q\neq p,$ the Sylow $q$-subgroup of $G$ is normal. The product of these Sylow subgroups is a nilpotent normal subgroup of $G$ and has index a power of $p$ in $G$, so $G$ is solvable, contradicting our assumption.
\end{proof}

 \begin{lemma}\label{nocenters}
 	Let $G$ be a finite group. Suppose that the identity element is the unique dominating vertex of $\mathcal{E}(G)$ and that $\mathcal{E}^{**}(G)$ is claw-free. Then $Z(H)=1$
 	for every non-solvable subgroup $H$ of $G$.
 \end{lemma}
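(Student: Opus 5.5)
We want to deduce Lemma~\ref{nocenters} from Lemma~\ref{Thm:non-solvable-claw-free-Z(G)=e}, applied to the subgroup $H$ instead of $G$. The difficulty is that $\cyc(H)$ may be non-trivial even when $\cyc(G)=1$, since an element can dominate in $\mathcal{E}(H)$ without dominating in $\mathcal{E}(G)$. So I will not apply that lemma directly. Instead I will rerun its argument inside $H$, producing the claw from elements of $H$. These elements are non-dominating in $\mathcal{E}(G)$ automatically, because $\cyc(G)=1$ and they are non-identity.

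Suppose $Z(H)\neq 1$ and let $p$ be a prime dividing $|Z(H)|$. Choose $x\in Z(H)$ of order $p$. Fix a prime $q\neq p$ and a Sylow $q$-subgroup $Q$ of $H$.

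First, $Q$ is not generalized quaternion. If it were, its three cyclic subgroups of order $4$ would give the claw of Lemma~\ref{excludedsbgs} in $\mathcal{E}(H)$. Adjacency in $\mathcal{E}(H)$ and in $\mathcal{E}(G)$ agree for elements of $H$, since both mean that $\langle u,v\rangle$ is cyclic. Since $\cyc(G)=1$, the four non-identity vertices lie in $\mathcal{E}^{**}(G)$, a contradiction.

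Second, $Q$ is cyclic. If not, $Q$ contains $C_q\times C_q$, so $\langle x\rangle\times C_q\times C_q\cong C_p\times C_q\times C_q\le H\le G$. This contradicts Lemma~\ref{excludedsbgs}, applied to $G$ with the roles of the primes switched. Concretely, $x$ together with generators of three distinct order-$q$ subgroups forms a claw.

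Third, $Q$ is normal in $H$. If it were not, $H$ would have at least $q+1\ge 3$ Sylow $q$-subgroups, with cyclic generators $y_1,y_2,y_3$. Distinct Sylow subgroups generated by cyclic elements give a non-cyclic $\langle y_i,y_j\rangle$. Otherwise $\langle y_i,y_j\rangle$ would be a cyclic $q$-group containing both Sylow subgroups, forcing them to be equal. Each $y_i$ commutes with the central element $x$ and has coprime order, so $\langle x,y_i\rangle$ is cyclic. Thus $\{x,y_1,y_2,y_3\}$ is an induced claw in $\mathcal{E}^{**}(G)$, a contradiction.

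Hence the product of the Sylow $q$-subgroups of $H$ over all $q\ne p$ is a nilpotent normal subgroup with $p$-power index. Therefore $H$ is solvable, contradicting the hypothesis, and so $Z(H)=1$.

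The main obstacle is the point flagged at the start: the hypothesis on dominating vertices concerns $G$, not $H$. It is handled by the observation that the claw-freeness of $\mathcal{E}^{**}(G)$ restricts to any set of non-identity elements of $H$. No property of $\cyc(H)$ is needed, because every claw vertex constructed above is a non-identity element.
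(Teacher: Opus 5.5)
Your proof is correct, and it differs from the paper's in how it handles the possibility that $\cyc(H)\neq 1$.

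\textbf{The paper's route.} The paper splits into two cases.
\begin{itemize}
\item If $\cyc(H)=1$, it applies Lemma~\ref{Thm:non-solvable-claw-free-Z(G)=e} to $H$ as a black box. This is legitimate because $\mathcal{E}^{**}(H)$ is then an induced subgraph of $\mathcal{E}^{**}(G)$.
\item If $\cyc(H)\neq 1$, it builds a different claw. The centre is an element $y\in\cyc(H)$ of prime order $q$. The leaves are three elements of order $p$ generating distinct subgroups of a non-cyclic Sylow $p$-subgroup $P$ of $H$. This case needs the theorem that a group whose Sylow subgroups are all cyclic is solvable. It also needs Lemmas~\ref{Lemma:K-cyclic-enh} and~\ref{excludedsbgs} to see that the Sylow $q$-subgroup of $H$ is cyclic, whence $q\neq p$.
\end{itemize}

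\textbf{Your route.} You observe that the argument of Lemma~\ref{Thm:non-solvable-claw-free-Z(G)=e} never uses $\cyc(H)=1$. It only needs the four claw vertices to be non-identity elements. These are automatically vertices of $\mathcal{E}^{**}(G)$, because $\cyc(G)=1$.

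So rerunning that argument inside $H$ with $x\in Z(H)$ disposes of both of the paper's cases at once. The result is a single, self-contained argument that needs neither the structure of $\cyc(H)$ nor the Sylow-cyclic solvability theorem. The paper's version is shorter on the page only because it reuses the earlier lemma.

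\textbf{A cosmetic point.} A generalized quaternion group of order at least $16$ has more than three cyclic subgroups of order $4$. So write ``three of its cyclic subgroups of order $4$'' rather than ``its three''. Any three distinct ones work, since they all contain the unique involution and any two of them generate a non-cyclic group.
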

 
 \begin{proof}
 Let $H$ be a non-solvable subgroup of $G$. If $\cyc(H)=1$, then the conclusion follows from Lemma~\ref{Thm:non-solvable-claw-free-Z(G)=e}. Otherwise, let $y$ be an element of prime order $q$ in $\cyc(H)$. Since $H$ is non-solvable, it contains a non-cyclic Sylow $p$-subgroup $P$ for some prime $p$ \cite[IV 2.11 Satz]{Endliche-gruppen-I-Huppert}. By Lemma~\ref{excludedsbgs}, $P$ is not generalized quaternion either. Hence $P$ contains three elements $x_1, x_2, x_3$ of order $p$ with $\langle x_i \rangle \neq \langle x_j \rangle$ for $i \neq j$. Since $y\in\cyc(H)$, the Sylow $q$-subgroup of $H$ is cyclic, so $q\neq p$. Therefore $y$ commutes with each $x_i$, and $(y, x_1, x_2, x_3)$ is a claw in $\mathcal{E}^{**}(G)$, a contradiction.
 \end{proof}

\begin{proposition}\label{prop:Fstar-F}
	Let $G$ be a finite non-solvable group. Suppose that the identity is the unique dominating vertex of $\mathcal{E}(G)$ and that  $\mathcal{E}^{**}(G)$ is claw-free. Then 
$G$ is an almost simple group.
\end{proposition}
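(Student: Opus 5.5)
The plan is to show that the soluble radical $R(G)$ is trivial and that $G$ has a unique minimal normal subgroup $N$, which is non-abelian simple. Then $C_G(N)=1$, so $G$ embeds in $\aut(N)$ and is almost simple.

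\emph{Step 1: $F(G)=1$.} Suppose $F(G)\neq 1$ and let $P=O_p(G)\neq 1$. Take $y$ of order $p$ in $Z(P)$. Since $G$ is non-solvable, Lemma~\ref{nocenters} gives $Z(G)=1$, so $C_G(y)$ is a proper subgroup. The idea is to exploit the many conjugates of non-commuting elements, as in Lemma~\ref{frobcent} and Step~1 of Proposition~\ref{solv-2-Frb-G-trival-dom}.

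If some $q$-element $x$ ($q\ne p$) normalises $P$, fails to centralise it, and centralises a non-trivial element of $Z(P)$, then Lemma~\ref{frobcent} produces a claw. All four vertices are non-identity, and since $\cyc(G)=1$ the claw lies in $\mathcal E^{**}(G)$, a contradiction. Otherwise every $p'$-element acting non-trivially on $P$ acts fixed-point-freely on $Z(P)$. Lemma~\ref{excludedsbgs} forbids $C_p\times C_p\times C_q$ and generalized quaternion subgroups. Hence, exactly as in Steps~1--2 of Proposition~\ref{solv-2-Frb-G-trival-dom}, each $p'$-subgroup $H$ acts on $Z(P)$ with $Z(P)H$ Frobenius. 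Since no generalized quaternion subgroups are allowed, the Sylow $q$-subgroups of $C_G(Z(P))$ for $q\neq p$ are trivial, and those of $G/C_G(Z(P))$ are cyclic.

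It follows that $C_G(Z(P))$ is a $p$-group, so it lies in $O_p(G)$. Also $G/C_G(Z(P))$ is a group whose $p'$-Sylows are cyclic and which acts Frobeniusly on its $p'$-part. A non-solvable Frobenius complement contains $\SL(2,5)$, which has a quaternion subgroup, and this is excluded. Pushing this through should force $G$ to be solvable, giving a contradiction. I expect this step to be the main obstacle, especially the bookkeeping when the $p$-part of $G/O_p(G)$ is large. As a fallback, I would apply Lemma~\ref{nocenters} to a non-solvable subgroup $H$ of $C_G(y)$ containing $y$ in its centre. If no such subgroup exists, every centraliser $C_G(y)$ is solvable, and the Frobenius-type constraints above make $G$ solvable.

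\emph{Step 2: $R(G)=1$.} Apply Step~1 to conclude $F(G)=1$, hence the solvable radical $R(G)$, whose Fitting subgroup would be non-trivial, is also trivial. Then the generalized Fitting subgroup is $F^*(G)=E(G)=N_1\times\cdots\times N_k$, a product of minimal normal subgroups $N_i$, each a direct power of non-abelian simple groups.

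\emph{Step 3: $F^*(G)$ is simple.} Suppose $F^*(G)$ contains $S_1\times S_2$ with $S_i$ non-abelian simple. Pick $y\in S_1$ of prime order $r$. Then $C_G(y)\ge S_2$, and $S_2$ has a non-cyclic Sylow $p$-subgroup with $p\neq r$; such a prime exists because $|S_2|$ has at least three prime divisors and non-cyclic Sylow $2$-subgroups. Taking three elements $x_1,x_2,x_3$ of order $p$ in $S_2$ generating distinct subgroups, the vertices $(y,x_1,x_2,x_3)$ form a claw in $\mathcal E^{**}(G)$, as in Lemma~\ref{nocenters}. This contradiction gives $F^*(G)=S$ simple. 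Then $C_G(S)\le F^*(G)\cap C_G(S)=Z(S)=1$, so $S\le G\le\aut(S)$ and $G$ is almost simple.
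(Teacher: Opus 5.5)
\textbf{There is a genuine gap in Step~1.} Your Steps~2--3 are fine and essentially reproduce the paper's treatment of the case $F^*(G)\neq F(G)$. One small fix in Step~3: choose $y$ of odd prime order $r$ and take $p=2$. Alternatively, note that for any prime $p\neq r$ dividing $|S_2|$, a subgroup of order $p$ has at least three conjugates; non-cyclicity of a Sylow subgroup is not needed. Likewise, the subcase where $F(G)\neq 1$ is cyclic is disposed of by applying Lemma~\ref{nocenters} to $C_G(F(G))$.

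The gap is Step~1 when $F^*(G)=F(G)$ is a non-cyclic $p$-group. The constraints you extract there are:
\begin{itemize}
\item every $p'$-element acts fixed-point-freely on $Z(P)$, so $C_G(Z(P))=P$;
\item the Sylow $q$-subgroups are cyclic for $q\neq p$;
\item there is no $C_r\times C_r\times C_s$ and no generalized quaternion subgroup.
\end{itemize}
These do \emph{not} force solvability. Take $G=V\rtimes\SL(2,8)$ with $V=\mathbb{F}_8^2$.
\begin{itemize}
\item $Z(G)=1$ and $F(G)=V$.
\item Every non-trivial odd-order element is conjugate into $\SL(2,8)$, acts fixed-point-freely on $V$, and has cyclic centralizer of order $7$ or $9$.
\item The Sylow $3$- and $7$-subgroups are cyclic.
\item There is no $Q_8$: with $U$ the upper unitriangular group, a quaternion subgroup of $V\rtimes U$ would require a root of $t^2+t+1$ in $\mathbb{F}_8$.
\end{itemize}
So neither Lemma~\ref{excludedsbgs} nor Lemma~\ref{frobcent} produces a claw, yet $G$ is non-solvable. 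Your remark about $\SL(2,5)$ does not apply, because $G/P$ is never a Frobenius complement on $Z(P)$: its $p$-elements always fix non-trivial elements of the $p$-group $Z(P)$. Your fallback also fails, since $C_G(y)$ is a $2$-group for every $1\neq y\in V$.

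What actually rules out this example is a claw of a different type. Its centre is an involution $c\in V$, and its leaves are three elements of order $4$ with square $c$ lying in distinct cyclic subgroups. The affine maps $w\mapsto u_aw+v$ with $1\neq u_a\in U$ and $v\notin C_V(u_a)$ have order $4$ and square equal to translation by $(u_a+1)v$, so there are many such leaves. This is the missing idea. You must first show $p=2$, which you never do, and then exploit the fact that the reduced power graph of a Sylow $2$-subgroup of $G$ must be claw-free.

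The paper does exactly this:
\begin{itemize}
\item By induction it proves that $G/F(G)$ is simple and $Z(F(G))$ is non-cyclic, which gives $p=2$.
\item It then uses the structure of $2$-groups with claw-free reduced power graph: exponent at most $4$, no $C_4\times C_2\times C_2$, and $P=N\langle x\rangle$ with $N$ abelian.
\item From this it shows $F(G)$ is elementary abelian of order at most $8$, so $G\cong\operatorname{AGL}(3,2)$, which is then excluded.
\end{itemize}
Without an argument of this kind, Step~1, and hence the proposition, is not established.
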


\begin{proof}
	We prove the statement by induction on the order of $G$. If $H$ is a proper non-solvable subgroup of $G$, then  by Lemma~\ref{nocenters}, $\cyc(H)\leq Z(H)=1,$ and therefore, by induction, $H$ is almost simple. Hence every proper subgroup of $G$ is either solvable or almost simple.

Recall that $F^*(G)$ is the central product 	$F(G)E(G)$ of the Fitting subgroup and the subgroup $E(G)$  generated by
all components of $G.$ Suppose that $F^*(G)\neq F(G).$ It follows from Lemma \ref{nocenters} that $Z(F^*(G))=1,$ hence $E(G)=S_1\times \dots \times S_r$ and $F^*(G)=F(G)\times E(G).$ Let $S=S_1.$ We claim that $F^*(G)=S.$ Otherwise let $1\neq g \in S_2\times \dots S_r\times F(G).$
Then $\langle S, g\rangle$ is a non-solvable subgroup of $G$ with non-trivial center, in contradiction with Lemma \ref{nocenters}.
Since $C_G(F^*(G))\leq F^*(G)$ and $F^*(G)=S$ is simple with trivial center, we have $C_G(S)=1$. Thus
$S\leq G\leq \operatorname{Aut}(S),$
and therefore $G$ is almost simple.

So we may assume that  $F^*(G)=F(G).$
By Lemma \ref{excludedsbgs}, $F(G)$ contains no subgroup isomorphic to $C_p\times C_p \times C_q,$ so either $F(G)$ is cyclic or it is a $p$-group. However in the first case, $G/C_G(F(G))=G/F(G) \leq \aut(F(G)),$ which implies that $G/F(G)$ is abelian and, consequently, $G$ is solvable.  Hence $F(G)$ is a noncyclic $p$-group. 

Let $R(G)$ denote the solvable radical of $G$, and let $Q$ be a Sylow $q$-subgroup of $R(G)$ for some prime $q$. The Frattini argument yields
$G=R(G)N_G(Q).$ Since $Q$ is nilpotent normal subgroup of $N_G(Q),$ $N_G(Q)$ is not almost simple.  If $N_G(Q)\neq G$, then $N_G(Q)$ is solvable. It follows that
$G=R(G)N_G(Q)$ is also solvable, a contradiction. Consequently,
$N_G(Q)=G.$
Thus every Sylow subgroup of $R(G)$ is normal in $G$. Hence $R(G)$ is nilpotent, and consequently $R(G)\leq F(G).$ Since $F(G)\leq R(G),$ we conclude $F(G)=R(G).$ Hence a minimal normal subgroup  $N/R(G)$ of $G/F(G)$ is non-abelian. Thus $N$ is neither solvable nor almost simple, and thus implies that $N=G.$ We have proved that $G/F(G)$ is a non-abelian simple group. 

Since $F(G)$ is a non-cyclic $p$-group, $Z=Z(F(G))$ is a non-trivial $p$-group. We claim that $Z$ is non-cyclic. Indeed, suppose that $Z$ is cyclic, and let $K=C_G(Z)$. Since $G/K\leq\operatorname{Aut}(Z)$ is abelian, $G/K$ is solvable. Since $G$ is non-solvable, $K$ is also non-solvable. But $Z\leq Z(K)$, so $K$ has non-trivial center, contradicting Lemma~\ref{Thm:non-solvable-claw-free-Z(G)=e}. Hence $Z$ is non-cyclic and $C_p\times C_p\leq Z.$ 
Let $T$ be a Sylow $q$-subgroup of $G$, where $q\neq p$. Since $[T,Z]\neq 1$ (otherwise $C_p\times C_p\times C_q)$, we deduce from Lemma \ref{frobcent} that $ZT$ is a Frobenius group with kernel $Z$ and complement $T$.
Therefore, $T$ is either cyclic or generalized quaternion.
The second possibility is excluded by Lemma \ref{excludedsbgs}. 
In particular, every Sylow $q$-subgroup of $G/F(G)$ is cyclic for $q\neq p$. On the other hand, the Sylow $2$-subgroups of a non-abelian simple group are never cyclic. Therefore, we must have $p=2$. 

Let $P$ be a Sylow $2$-subgroup of $G$. Notice that the reduced power graph of $P$ must be claw-free since a claw in this graph would be also a claw in $\mathcal E^{**}(G).$ In particular, by \cite[Lemma~2.3]{claw-free-pwr-graph}, no subgroup of $P$ is isomorphic to $C_4\times  C_2\times  C_2.$

Observe that $P$ is non-abelian. Indeed, if $P$ were abelian, then
$P \leq C_G(F(G)) = F(G),$
which contradicts the fact that $P/F(G)$ is isomorphic to a Sylow $2$-subgroup of the non-abelian simple group $G/F(G)$.
Furthermore, $P$ cannot be dihedral. Indeed, if $P$ were dihedral, then $F(G)$ would be a normal subgroup of $P$ of index at least $4$ and therefore $F(G)$ would be cyclic. Since $\mathcal{P}^*(G)$ is claw-free, it follows from \cite[Proposition~2.6]{claw-free-pwr-graph} that the exponent of $P$ is at most 4.

We next show that \(F(G)\) is elementary abelian. 
Suppose, to the contrary, that $F(G)$ contains an element $g$ of order 4. Then the subgroup $X$ generated by the elements of order 2 in $Z(F(G))$ has order at most 4 (otherwise $C_P(g)$ contains a subgroup isomorphic to $C_4\times C_2\times C_2)$. Since, by Lemma \ref{Thm:non-solvable-claw-free-Z(G)=e}, $Z(G)=1,$ we have that $C_G(X)\neq G$. It follows that $C_G(X)$ is solvable. Since $G/C_G(X)\leq \aut(X)$ is also solvable, we deduce that $G$ itself is solvable, a contradiction.

Now we may conclude the proof by using the same argument as in the proof of Theorem 4.1 in \cite{claw-free-pwr-graph}. By  \cite[Proposition 2.8]{claw-free-pwr-graph} $P=N\langle x\rangle,$ with $N$ an abelian normal subgroup of $P.$ Moreover, since $P/F(G)$ is isomorphic to a Sylow 2-subgroup of a non-abelian simple group, $P/F(G)$ is not cyclic.

First assume that $N\leq C_4\times C_4.$ This implies $F(G)\leq C_2\times C_2 \times C_2.$ Since $G/F(G)$ is a non-abelian simple group, the only possibility is that $F(G)\cong C_2\times C_2\times C_2$ and $G/F(G)\cong {\rm{SL}}(3,2).$ In particular, $G$ is perfect of order 1344, with no element of order 8. There is a unique possibility: $G\cong \rm{{AGL}}(3,2)$, that can be exclude since $G$ would contains 3 elements $x_1,x_2,x_3$ of order 6, which pairwise do not generate a cyclic subgroup, and with $x_1^3=x_2^3=x_3^3.$

Since $N$ cannot be cyclic (otherwise $|F(G)|\leq 4$ and $G$ would be solvable), by \cite[Corollary 2.4]{claw-free-pwr-graph}, $N$ is an elementary abelian 2-group. 
Since $P/F(G)$ is not cyclic, $N$ is not contained in $F(G).$ This implies  $N\not\leq C_P(F(G)).$ 
Let $y\in F(G)\setminus C_P(N).$ Then $\langle y\rangle N$ is non-abelian and therefore there exists $n\in N$ such that $z=yn$ has order 4. Since $P$ contains no subgroup isomorphic to $C_4\times C_2\times C_2,$ we deduce $|C_N(z)|\leq 4.$
Since $F(G)$ is abelian, $F(G)\cap N\leq C_N(z)=C_N(y)\leq C_2\times C_2$. This implies $F(G)\leq C_2\times C_2 \times C_2$. We can conclude as in the previous paragraph.
\end{proof}

\begin{theorem}\label{thm:almost-simple}
	Let $G$ be a finite non-solvable group. If the proper enhanced power graph $\mathcal{E}^{**}(G)$ is claw-free, then the quotient group $G/\cyc(G)$ is almost simple.
\end{theorem}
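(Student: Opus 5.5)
The plan is to reduce to Proposition~\ref{prop:Fstar-F} by passing to the quotient $\bar G = G/\cyc(G)$. Set $C=\cyc(G)$. By Lemma~\ref{Lemma:K-cyclic-enh}, $C$ is a cyclic subgroup of $Z(G)$, and in particular it is solvable. Hence $\bar G$ is non-solvable, since otherwise $G$ would be an extension of the solvable group $C$ by a solvable group.

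Next we apply Corollary~\ref{behacyc}. By part~(3), $\mathcal{E}^{**}(\bar G)$ is claw-free because $\mathcal{E}^{**}(G)$ is. By part~(2), the identity is the only dominating vertex of $\mathcal{E}(\bar G)$, so $\cyc(\bar G)=1$. Here one should check that part~(2) is really about the dominating vertices of $\mathcal{E}(\bar G)$ and not of the already-pruned graph. The proof of Corollary~\ref{behacyc}(2) does exactly this: it shows that $xC$ dominates precisely when $\langle xC, yC\rangle$ is cyclic for all $y$, which is the condition for being a dominating vertex of $\mathcal{E}(G/C)$. So the hypotheses of Proposition~\ref{prop:Fstar-F} are satisfied by $\bar G$: it is non-solvable, the identity is its unique dominating vertex, and $\mathcal{E}^{**}(\bar G)$ is claw-free.

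Proposition~\ref{prop:Fstar-F} then gives that $\bar G=G/\cyc(G)$ is almost simple, which is the claim.

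I expect no substantial obstacle: the whole difficulty has already been absorbed into Proposition~\ref{prop:Fstar-F}. The only point requiring care is that Corollary~\ref{behacyc} delivers both hypotheses of that proposition for the quotient, namely the triviality of the cyclicizer and claw-freeness. The latter transfers because adjacency between non-dominating vertices is preserved under the quotient by Lemma~\ref{Lemma:Cyclic-iff-cyclic}.
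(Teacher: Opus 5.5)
Your proposal is correct and follows the paper's own argument: pass to $G/\cyc(G)$, use Corollary~\ref{behacyc} to get claw-freeness and a trivial cyclicizer for the quotient, and then invoke Proposition~\ref{prop:Fstar-F}. You also fill in two details the paper leaves implicit: that $G/\cyc(G)$ stays non-solvable because $\cyc(G)$ is cyclic, and that Corollary~\ref{behacyc}(2) is really a statement about the dominating vertices of $\mathcal{E}(G/\cyc(G))$.
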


\begin{proof}
	By Corollary  \ref{behacyc}, we may assume that $\mathcal E^{**}(G/\cyc(G))$ is claw-free and $\cyc(G/\cyc(G))=1.$ So the conclusion follows from the previous proposition.
\end{proof}

\section{Proof of Theorem \ref{nonsolvable}}

The following theorem plays a key role in the study of finite simple groups whose proper enhanced power graph is claw-free.

\begin{theorem}\label{thm:classification}
	Let $G$ be a finite non-abelian simple group.  Assume that $G$ contains neither a subgroup isomorphic to $Q_8$ nor a subgroup
	isomorphic to $C_2\times C_2\times C_p$, for any odd prime $p$.
	Then $G$ is isomorphic to one of the following groups:
	\[
	\PSL(2,q),\qquad
	\Sz(2^{2n+1})\ (n\geq 1),\qquad
	J_1,
	\]
	where $q\geq4$ is a prime power.
\end{theorem}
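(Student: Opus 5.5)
The two hypotheses translate into conditions on involution centralizers, and I would then feed these into the classification of finite simple groups. Let $G$ be non-abelian simple with no $Q_8$ and no $C_2\times C_2\times C_p$ for odd $p$, and let $S$ be a Sylow $2$-subgroup; $S$ is non-cyclic by Burnside's transfer theorem. The second hypothesis says: for every Klein four-group $V\le G$, $C_G(V)$ is a $2$-group. Indeed, an element of odd prime order centralizing $V$ would give $C_2\times C_2\times C_p$. The first hypothesis restricts $S$ strongly. A $2$-group with no $Q_8$ subgroup and no cyclic-or-generalized-quaternion obstruction has no quaternion sections of this type; in particular $S$ is not generalized quaternion, which is also excluded by Brauer--Suzuki. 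I would use these two facts together.

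The main step is to show that $G$ has a strongly embedded subgroup, or else belongs to a short explicit list. I would argue via the classification, running through the families one at a time. For groups of Lie type in odd characteristic $r$, the centralizer of an involution contains a torus part, and for rank at least $2$ one finds a Klein four-group centralizing an element of odd order. Otherwise $\SL(2,r)$-type subgroups produce $Q_8$. For example, $\PSL(3,q)$ with $q$ odd contains $\GL(2,q)/\!\sim$ and hence $Q_8$-sections realized as subgroups. This leaves only $\PSL(2,q)$. In characteristic $2$, a Levi complement of odd order centralizing a root subgroup $C_2\times C_2$ kills everything except the rank-one groups $\PSL(2,2^n)$, $\Sz(2^{2n+1})$ and $\PSU(3,2^n)$. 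The last is excluded because the centre of its Sylow $2$-subgroup is centralized by an element of order $(q+1)/\gcd(3,q+1)$, which is odd and nontrivial for $q>2$. Alternating groups $A_n$ with $n\ge 8$ contain $\langle (12)(34),(13)(24)\rangle\times\langle(567)\rangle$, and $A_7$ does too. $A_5$ and $A_6$ are $\PSL(2,4)$ and $\PSL(2,9)$.

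For the sporadic groups I would consult involution centralizers in the ATLAS. All sporadic groups except $J_1$ have an involution centralizer containing $Q_8$, or a Klein four-group centralized by an element of order $3$. Examples are $M_{11}$, whose $\GL(2,3)$ centralizer contains $Q_8$, and $J_2$, $J_3$ and the rest, whose centralizers contain extraspecial $2^{1+4}$ or $2^{1+2}$ groups. $J_1$ has $C(t)=C_2\times A_5$ and elementary abelian Sylow $2$-subgroups of order $8$. Here $C_G(V)$ is elementary abelian of order $8$, so the condition holds.

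The main obstacle is the Lie type case analysis, which must be made uniform. I would first handle rank at least $2$ by exhibiting a Levi-type subgroup $\SL(2,q)\times$(torus), or $\SL(2,q)$ in odd characteristic, which directly produces $Q_8$ or $C_2\times C_2\times C_p$. Only the rank-one twisted and untwisted groups then remain, and these are checked individually as above. An alternative shortcut would be Gorenstein--Walter for dihedral Sylow $2$-subgroups together with Alperin--Brauer--Gorenstein, but the reduction to those Sylow types is itself nontrivial, so I would rely on the case-by-case classification check.
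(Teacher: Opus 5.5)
Your route differs from the paper's. The paper applies Syskin's classification of simple groups in which every four-group has a $2$-group centralizer. That returns an explicit list: $\PSL(2,q)$, $\Sz(q)$, $J_1$, $M_{11}$, $\PSL(3,4)$, ${}^2F_4(2)'$, and $\PSL(3,q)$, $\PSU(3,q)$, $G_2(q)$ for odd $q$. The paper then discards everything except $\PSL(2,q)$, $\Sz(q)$, $J_1$ by exhibiting $Q_8$. A direct CFSG sweep could replace this, but yours breaks at exactly the groups where the $Q_8$ hypothesis is indispensable.

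\textbf{Characteristic $2$.} Your claim that an odd-order torus element centralizing a root group $C_2\times C_2$ ``kills everything except the rank-one groups'' is false.
\begin{itemize}
\item For untwisted groups over $\mathbb{F}_2$, root groups have order $2$ and the split torus is trivial, so the mechanism produces nothing. Already $\PSL(3,2)\cong\PSL(2,7)$ is a rank-$2$ group that survives.
\item In $\PSL(3,4)$, the torus elements of $\SL(3,4)$ centralizing the $(1,2)$ root group are $\mathrm{diag}(x,x,x^{-2})=xI$ (since $x^3=1$). These are scalar, hence trivial in $\PSL(3,4)$. In fact every involution centralizer in $\PSL(3,4)$ has order $64$, so the group contains no $C_2\times C_2\times C_p$ at all.
\item The Tits group ${}^2F_4(2)'$ likewise contains no such subgroup.
\end{itemize}
These groups satisfy your four-group condition and can only be removed via $Q_8$, which your characteristic-$2$ analysis never invokes. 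In the unitriangular Sylow $2$-subgroup of $\PSL(3,4)$, the elements with superdiagonal entries $(1,1)$ and $(\omega,\omega^2)$, where $\omega^2+\omega+1=0$, generate $Q_8$. Also ${}^2F_4(2)'\geq\PSL(3,3)\geq\SL(2,3)\geq Q_8$. The remaining groups over $\mathbb{F}_2$ need their own arguments, e.g.\ $\PSL(n,2)\geq A_8$ for $n\geq 4$.

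\textbf{Odd characteristic.} Your sentence that rank $\geq 2$ always yields a four-group centralized by an odd-order element is also false. For example, $\PSL(3,3)$ has $C(t)\cong\GL(2,3)$ and every four-group centralizer is $C_2\times C_2$. Your fallback, that $\SL(2,q)\leq G$ forces $Q_8\leq G$, repairs this uniformly. The exception is the Ree groups ${}^2G_2(3^{2n+1})$, which you never mention. Their Sylow $2$-subgroups are elementary abelian of order $8$, so they contain no $Q_8$ and must be excluded through the four-group condition:
\begin{itemize}
\item $C_G(t)=\langle t\rangle\times\PSL(2,q)$;
\item an involution $s\in\PSL(2,q)$ has dihedral centralizer of order $q+1\equiv 4\pmod 8$;
\item the nontrivial odd-order part of that centralizer centralizes $\langle t,s\rangle$, giving $C_2\times C_2\times C_p$.
\end{itemize}

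\textbf{Sporadic groups.} This step is only asserted, though the claim is true. Note that ``$2^{1+2}$'' does not yield $Q_8$, since $2^{1+2}_+\cong D_8$.

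\textbf{Unused remarks.} Your opening remarks about Burnside, Brauer--Suzuki and strongly embedded subgroups play no role in the argument.
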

\begin{proof}
	Assume first that no subgroup of $G$ is isomorphic to $C_2\times C_2\times C_p$, with $p$ an odd prime. Then $C_G(V)$ is a 2-group for every subgroup $V$ of $G,$ with $V\cong C_2\times C_2.$
By	\cite[Theorem 1]{Syskin1978},
$G$ is isomorphic to one of the following groups:
	\begin{enumerate}[(1)]
		\item $\operatorname{PSL}(2,q)$, $\operatorname{Sz}(q)$, $J_{1}$, $M_{11}$, $\operatorname{PSL}(3,4)$, or ${}^{2}F_{4}(2)'$;
		\item $\operatorname{PSL}(3,q)$, $\operatorname{PSU}(3,q)$, or $G_{2}(q)$, where $q \geq 3$ is an odd prime power.
	\end{enumerate}
	 We now eliminate from that list the groups
	which contain $Q_8$.
	
	The group $M_{11}$ has a maximal subgroup of shape
	$2.S_4\cong\GL(2,3)$; its subgroup $\SL(2,3)$ contains a normal quaternion
	subgroup of order eight.  Thus $Q_8\leq M_{11}$.  
	The group $\PSL(3,4)$ also contains $Q_8$.  
	The Tits group ${}^2F_4(2)'$ contains a subgroup isomorphic to
	$\PSL(3,3)$; the latter contains the block-diagonal subgroup
	$\SL(2,3)$ and hence contains $Q_8$.
	
	If $q$ is odd, both $\PSL(3,q)$ and $\PSU(3,q)$ contain a natural rank-one
	subgroup isomorphic to $\SL(2,q)$.  A Sylow $2$-subgroup of $\SL(2,q)$ is
	generalized quaternion and therefore contains $Q_8$.  Finally, $G_2(q)$ in
	odd characteristic contains a subsystem subgroup of type $A_2$, and hence a
	subgroup obtained from $\SL(3,q)$ by factoring out a central subgroup of odd
	order; this again contains $Q_8$.  
	Consequently only $\PSL(2,q)$, $\Sz(2^{2n+1})$, and $J_1$ remain.
\end{proof}

\begin{corollary}\label{semplici}Let $S$ be a non-abelian simple group. If $\mathcal E^{**}(S)$ is claw-free, then $S\cong \PSL(2,q)$, with $q\geq 4.$
	\end{corollary}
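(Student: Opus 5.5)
The plan is to reduce to Theorem~\ref{thm:classification} and then rule out the extra groups $\Sz(2^{2n+1})$ and $J_1$ directly.

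First, since $S$ is non-abelian simple, $Z(S)=1$, and so $\cyc(S)\le Z(S)$ is trivial by Lemma~\ref{Lemma:K-cyclic-enh}. Thus the identity is the only dominating vertex of $\mathcal{E}(S)$, and Lemma~\ref{excludedsbgs} applies. It shows that $S$ contains no generalized quaternion subgroup, in particular no $Q_8$. It also shows that $S$ contains no subgroup $C_p\times C_p\times C_q$, in particular no $C_2\times C_2\times C_p$ with $p$ odd. The hypotheses of Theorem~\ref{thm:classification} therefore hold, and $S$ is isomorphic to $\PSL(2,q)$ with $q\ge 4$, to $\Sz(2^{2n+1})$, or to $J_1$.

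It remains to exclude the last two families, which I will do by producing explicit claws.

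\textbf{Suzuki groups.} For $S=\Sz(q)$ with $q=2^{2n+1}\ge 8$, a Sylow $2$-subgroup $P$ has order $q^2$ and exponent $4$. Its center $Z(P)$ is elementary abelian of order $q$, and $\Omega_1(P)=Z(P)$. Take an involution $y\in Z(P)$. Every element $x$ of order $4$ in $P$ has square $x^2\in Z(P)$, and the squaring map from $P\setminus Z(P)$ to $Z(P)\setminus\{1\}$ is surjective and $q$-to-one. So there are at least three elements $x_1,x_2,x_3$ of order $4$ with $x_i^2=y$ and $\langle x_i\rangle$ pairwise distinct (in fact there are $q/2\ge 4$ distinct cyclic subgroups). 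Each $x_i$ is adjacent to $y$. Two of them could be adjacent only if they generated a cyclic group of order $4$, which would force $\langle x_i\rangle=\langle x_j\rangle$, since the exponent is $4$. Hence $\{y,x_1,x_2,x_3\}$ is a claw. Since $\cyc(S)=1$, all four vertices lie in $\mathcal{E}^{**}(S)$.

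\textbf{$J_1$.} The centralizer of an involution in $J_1$ is $C_2\times A_5$. This contains $C_2\times C_3$, and hence $C_2\times C_2\times C_3$ inside $C_2\times A_4$, which was already excluded. I would double-check that Theorem~\ref{thm:classification} genuinely needs $J_1$ to be listed. If so, the simpler argument is as follows. Let $y$ be the central involution of $C_2\times A_5$, and take three elements of order $5$ in $A_5$ generating distinct Sylow $5$-subgroups. Each commutes with $y$, so it is adjacent to $y$, since the orders are coprime. Two of them are not adjacent, since distinct subgroups of order $5$ do not generate a cyclic group. This gives a claw.

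I expect the main obstacle to be the Suzuki case. It needs the precise structure of the Sylow $2$-subgroup, namely that every involution of $P$ is central and has many square roots lying in distinct cyclic subgroups. The cleanest route is to quote the standard description of Suzuki $2$-groups: $|Z(P)|=q$, $\Omega_1(P)=Z(P)$, and the elements of order $4$ number $q^2-q$, so that each involution has $q$ square roots forming $q/2$ cyclic subgroups. With this in hand the corollary follows, since only $\PSL(2,q)$ with $q\ge 4$ survives.
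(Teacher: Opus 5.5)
Your proof is correct and follows the paper's route. Lemma~\ref{excludedsbgs} and Theorem~\ref{thm:classification} leave only $\PSL(2,q)$, $\Sz(2^{2n+1})$ and $J_1$. The Suzuki groups are excluded by a claw whose center is an involution and whose leaves are three square roots of order $4$. The paper quotes this claw from \cite[Lemma 5.3]{claw-free-pwr-graph}; your derivation from the structure of the Suzuki $2$-group is a self-contained substitute. $J_1$ is excluded through its subgroup $C_2\times A_5$, and your explicit claw (the central involution together with elements of order $5$ from three distinct Sylow $5$-subgroups) is exactly the one Lemma~\ref{nocenters} produces.

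Discard your first remark on $J_1$, however. $C_2\times A_4$, and indeed $C_2\times A_5$, contains no subgroup $C_2\times C_2\times C_3$, because the abelian subgroups of $A_5$ have order at most $5$. In fact $J_1$ genuinely satisfies the hypotheses of Theorem~\ref{thm:classification}: its Sylow $2$-subgroups are elementary abelian, and every four-group has centralizer $C_2\times C_2\times C_2$. So the claw argument is needed to exclude it.
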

\begin{proof}
Let $S$ be a finite non-abelian simple group and suppose that $\mathcal E^{**}(S)$ is claw-free. By Lemma~\ref{excludedsbgs} and Theorem \ref{thm:classification}, 	$S$ is isomorphic to one of the following groups:
$
\PSL(2,q)$ with $q\geq 4,$
$\Sz(2^{2n+1})$,
$J_1.$ By \cite[Lemma 5.3]{claw-free-pwr-graph}, $\Sz(2^{2n+1})$ contains a claw with center of order 2, and leaves of order 4, so $\mathcal{E}^{**}(\Sz(2^{2n+1}))$ is not claw-free.
The group $J_1$ contain a subgroup $H\cong A_5\times C_2$, so $\mathcal E^{**}(J_1)$ is not claw-free by Lemma \ref{nocenters}.
\end{proof}

\begin{theorem}
	\label{PSL}
	Let
$S=\PSL(2,q)$ with $q=p^n\geq 5,$ where $n$ is a prime, and let $G$ be an almost simple group
	with socle $S$. 
	Then $\mathcal{E}^{**}(G)$ is claw-free if and only if 
		$S\leq G\leq \operatorname{PGL}(2,q).$ 
	\end{theorem}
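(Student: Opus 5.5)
The plan is to prove both directions separately. Since $G$ is almost simple, $Z(G)=1$, and Lemma~\ref{Lemma:K-cyclic-enh} gives $\cyc(G)=1$. So $\mathcal E^{**}(G)$ is just $\mathcal E(G)$ with the identity removed, and Lemmas~\ref{excludedsbgs}, \ref{frobcent} and \ref{nocenters} all apply to $G$.

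\emph{Sufficiency.} Let $S\le G\le \PGL(2,q)$, so that $G=S$ or $G=\PGL(2,q)$. I would use the classical fact that $\PSL(2,q)$ and $\PGL(2,q)$ are \emph{partitioned} by their maximal cyclic subgroups. These are:
\begin{itemize}
\item the split tori, of order $(q-1)/d$ or $q-1$;
\item the non-split tori, of order $(q+1)/d$ or $q+1$;
\item the subgroups of order $p$ lying in the elementary abelian Sylow $p$-subgroups, which are TI.
\end{itemize}
The argument is short. Take $1\ne x$. If $x$ is not an involution in a torus, then $C_G(x)$ is either the torus containing $x$ or the Sylow $p$-subgroup containing $x$. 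If $x$ is an involution in a torus $T$, then $C_G(x)$ is dihedral with cyclic part $T$, and every element $g$ with $x\in\langle g\rangle$ centralizes $x$ and has order greater than $2$, so $g\in T$. In every case $x$ lies in a unique maximal cyclic subgroup $M_x$. Hence two non-identity elements $x,y$ are adjacent if and only if $M_x=M_y$. Therefore $\mathcal E^{**}(G)$ is a disjoint union of cliques, which is claw-free.

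\emph{Necessity.} Suppose $G\not\le\PGL(2,q)$. Since $\aut(S)=\PGL(2,q)\rtimes\langle\varphi\rangle$ with $|\varphi|=n$ prime, $G$ projects onto the field-automorphism part of $\operatorname{Out}(S)$. I first want an element $\sigma\in G$ that induces a field automorphism of order $n$ and whose centralizer in $S$ contains $\PSL(2,p)$.
\begin{itemize}
\item If $n$ is odd, or $q$ is even, then $\gcd(n,d)=1$ with $d=\gcd(2,q-1)$. So a suitable power of an outer element of $G$ lies in $S\varphi^i$. By Lang's theorem, this coset contains a conjugate of $\varphi^i$, and we may take $\sigma$ to be that conjugate.
\item If $n=2$, $q$ odd and $G$ contains $S\varphi$, take $\sigma=\varphi$.
\end{itemize}

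With $\sigma$ in hand, I produce a claw case by case.
\begin{itemize}
\item If $p\ge 5$, then $\langle\sigma\rangle\times\PSL(2,p)\le G$ is non-solvable with non-trivial center, contradicting Lemma~\ref{nocenters}.
\item If $p=2$, then $n$ is an odd prime and $C_S(\sigma)\cong S_3$. Take $\sigma$ as the center and three involutions of this $S_3$ as leaves. The involutions pairwise do not commute, while each $\langle\sigma,t\rangle$ is cyclic of order $2n$.
\item If $p=3$ and $n$ is odd, then $C_2\times C_2\times C_n\le A_4\times\langle\sigma\rangle$, contradicting Lemma~\ref{excludedsbgs}.
\item If $p=3$ and $n=2$, then $q=9$ and $G\ge S\varphi\cong S_6$. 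Here $(123)$ with leaves $(45),(46),(56)$ is a claw.
\end{itemize}

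The main obstacle is the remaining case: $n=2$, $q$ odd, and $G$ meets the outer cosets only in $S\delta\varphi$ (for instance, the group $G=S\langle\delta\varphi\rangle$, of $M_{10}$-type). Here no field automorphism need lie in $G$. My plan is to show that a Sylow $2$-subgroup of $S\langle\delta\varphi\rangle$ is semidihedral, extending the dihedral Sylow $2$-subgroup of $S$ by an element squaring into it. A semidihedral group contains $Q_8$, so $G$ contains a generalized quaternion subgroup, and Lemma~\ref{excludedsbgs} rules this case out. As a fallback, I would use the analogue of Lemma~\ref{frobcent} with an element of order $4$ in $\delta\varphi S$. If $G$ contains both $S\varphi$ and $S\delta\varphi$, it contains $S\varphi$, so the earlier argument applies.
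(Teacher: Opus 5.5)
Your proposal is correct and follows the paper's proof. Sufficiency goes through the partition of $\PSL(2,q)$ and $\PGL(2,q)$ into cyclic tori and elementary abelian Sylow $p$-subgroups, so $\mathcal{E}^{**}(G)$ is a disjoint union of cliques. Necessity uses the same three cases: a field automorphism of odd prime order, the pure field involution $\varphi$, and $G=S\langle\delta\varphi\rangle$, handled by a semidihedral Sylow $2$-subgroup containing $Q_8$. Your appeals to Lemma~\ref{nocenters} for $p\ge 5$, to Lemma~\ref{excludedsbgs} for $p=3$, and the explicit claw in $S_6$ are only minor variants of the paper's claws centred at the field automorphism. Lang's theorem is superfluous, since $S\le G$ already gives $\varphi^i\in G$.

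The one step to firm up is the semidihedral claim. The reason you give, ``an element squaring into the dihedral subgroup'', holds for every overgroup of index $2$ and so proves nothing. Either cite the result, as the paper does (Gorenstein), or show that the coset $S\delta\varphi$ contains no involutions. Together with the dihedral Sylow $2$-subgroup of $S$ of order at least $8$, that forces a subgroup $Q_8$.
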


\begin{proof}
	We divide the proof into several steps.
	
	\medskip
	
	\textbf{Step 1}: Groups contained in $\operatorname{PGL}(2,q)$.
	
	Assume that $S\leq G\leq \operatorname{PGL}(2,q)$. By\cite[Sätze 8.2, 8.3 and 8.5]{Endliche-gruppen-I-Huppert}, $G$ admits the group partition $\mathcal P=\{A^x,B^x,P^x\mid x\in G\},$ where $P$ is elementary abelian of order $q$, while $A$ and $B$ are cyclic of orders $$\frac{q-1}{d}\qquad\text{and}\qquad \frac{q+1}{d},$$ respectively, with $$d=\begin{cases}1,&\text{if }G=\operatorname{PGL}(2,q),\\\gcd(q-1,2),&\text{if }G=S.\end{cases}$$ Here “group partition” means that distinct members of $\mathcal P$ intersect precisely in the identity and that their union is $G$.
	
	We claim that every connected component of $\mathcal E^{**}(G)$ is complete. Indeed, let $u,v\neq 1$ be adjacent. Then $\langle u,v\rangle$ is cyclic. If $z$ is a generator of $\langle u,v\rangle$, the element $z$ belongs to a unique member $X\in\mathcal P$. Since $X$ is a subgroup and $u,v\in\langle z\rangle$, we have $u,v\in X$.
	If $X$ is a conjugate of $A$ or $B$, then $X$ is cyclic, so $X\setminus{1}$ is a clique. If $X$ is a conjugate of $P$, then $X$ has exponent $p$, and $u$ and $v$ are adjacent precisely when they belong to the same subgroup of order $p$. Consequently, $X\setminus{1}$ is the disjoint union of the cliques $\{L\setminus{1}\mid L\leq X, |L|=p\}.$
	
	It follows that $\mathcal E^{**}(G)$ is a disjoint union of complete graphs. In particular, it is claw-free.

	\medskip
	\noindent
	\textbf{Step 2}: The outer automorphism structure.
	
	We now prove the converse.  Recall that
$
	\operatorname{Aut}(S)
	=\operatorname{P\Gamma L}(2,q)
	=\operatorname{PGL}(2,q)\rtimes\langle\varphi\rangle,
$
	where
$
	\varphi:a\longmapsto a^\ell
$
	is the standard field automorphism of order $n$.  Moreover,
$
	\operatorname{Out}(S)
	\cong C_{(2,q-1)}\times C_n.
$
	When $q$ is odd, denote by $\delta$ the non-trivial diagonal
	outer automorphism.
	Assume that
$
	G\not\leq \operatorname{PGL}(2,q).
$
	Thus the image of $G/S$ in the field-automorphism factor is
	non-trivial.  We show that
	the graph $\mathcal{E}^{**}(G)$ contains an induced claw.
	
	\medskip
	\noindent
	\textbf{Step 3}: A field automorphism of odd prime order.
	
	Suppose that the field part of $G/S$ has order divisible by an
	odd prime $r$.  Since the diagonal outer factor has order at
	most $2$, the group $G/S$ contains a pure field automorphism of
	order $r$.  Thus $G$ contains
$
	\alpha=\varphi^{n/r},
$
	after replacing $\alpha$ by a suitable power if necessary.
	Put
$
	q_0=p^{n/r}.
$
	The automorphism $\alpha$ centralizes the natural subfield
	subgroup $
	S_0\cong\operatorname{PSL}(2,q_0).$
	The group $S_0$ contains at least three distinct involutions.
	Indeed, this is immediate when
$
	\operatorname{PSL}(2,2)\cong S_3$
or $	\operatorname{PSL}(2,3)\cong A_4.
$
	If $q_0\geq4$, then $\operatorname{PSL}(2,q_0)$ is non-abelian
	simple, and the conjugacy class of an involution has size at
	least $3$.
	Choose three distinct involutions
$
	t_1,t_2,t_3\in S_0.
$
	Since $\alpha$ centralizes $S_0$ and $r$ is odd,
each $t_i$ is adjacent to $\alpha$.
	On the other hand, two distinct involutions cannot generate a
	cyclic group. It
	follows that
$
	\{\alpha,t_1,t_2,t_3\}
$
	induces a claw with center $\alpha$.
	
	\medskip
	
	Consequently, if $\mathcal{E}^{**}(G)$ is claw-free, the field
	part of $G/S$ must be a $2$-group.
	
	\medskip
	\noindent
	\textbf{Step 4}: A pure field involution.
	
	Suppose that $G$ contains the pure field involution
$
	\tau=\varphi^{n/2}.
$
	Set
$
	q_0=p^{n/2}.
$
	Then $\tau$ centralizes the natural subfield subgroup
$
	S_0\cong\operatorname{PSL}(2,q_0).
$
	
Since we are assuming $q\geq 5,$ we have that $q_0\geq3$.  We claim that $S_0$ contains
	three elements
$
	y_1,y_2,y_3
$
	of the same odd prime order $r$, lying in three distinct
	subgroups of order $r$.
	For $q_0=3$, this follows from
$
	\operatorname{PSL}(2,3)\cong A_4,
$
	which has four subgroups of order $3$.  If $q_0\geq4$, then
	$S_0$ is non-abelian simple.  Choose an odd prime divisor $r$
	of $|S_0|$ and a subgroup $R$ of order $r$.  The subgroup $R$
	is not normal in $S_0$, so its conjugacy orbit has size at
	least $3$.  Therefore $S_0$ contains at least three distinct
	conjugates of $R$. Choose $1\neq y_i\in R_i,$
	where $R_1,R_2,R_3$ are distinct subgroups of order $r$.
	Since $\tau$ centralizes $S_0$ and $r$ is odd,
	$\{\tau,y_1,y_2,y_3\}$
	induces a claw.
	
%
	
	\medskip
	\noindent
	\textbf{Step 5}: A diagonal-field involution.
	
	It remains to consider the case in which $q$ is odd and $G/S$
	contains the diagonal-field involution
$
	\overline{\delta}\,
	\overline{\varphi}^{\,n/2},
$
	but not the pure field involution
$
	\overline{\varphi}^{\,n/2}.$
The Sylow $2$-subgroup of the corresponding subgroup of $\operatorname{Aut}(S)$ is
 semidihedral \cite[Lemma 2.3]{goren}.
	Since $q$ is an odd square, these Sylow subgroups have order at
	least $16$. Since every semidihedral group of order at least
	$16$ contains a quaternion subgroup of order $8$, we conclude that $\mathcal E^{**}(G)$ is not claw-free.

\medskip

	The structure
	$
	\operatorname{Out}(S)
	\cong C_2\times C_n
$
	shows that the preceding cases exhaust all groups which induce
	a non-trivial field automorphism.  Therefore the only
	claw-free groups are those listed in the statement.
\end{proof}

\begin{proof}[Proof of Theorem \ref{nonsolvable}]
	
First assume that $G=X \times Y$, with $X$ and $Y$ satisfying the hypotheses of the statement. Then $G/\cyc(G)$ is isomorphic to $\PSL(2,q)$ or $\PGL(2,q)$, with $q\geq 5$. By Theorem~\ref{PSL}, $\mathcal{E}^{**}(G/\cyc(G))$ is claw-free. By Corollary~\ref{behacyc}, $\mathcal{E}^{**}(G)$ is also claw-free.

Conversely,	suppose that
	$G$ is a finite non-solvable group and that $\mathcal E^{**}(G)$ is claw-free. Let $C=\cyc(G).$ By Theorem \ref{thm:almost-simple}, $H\cong G/C$ is an almost simple group, so in particular $C=Z(G)$, and, by Corollary \ref{semplici}, $\soc(G/C)=\PSL(2,q).$ Since $\PSL(2,4)\cong \PSL(2,5)$, we may assume without loss of generality
	that $q\geq 5,$ so, by Theorem \ref{PSL}, $H$ is isomorphic to $\PSL(2,q)$ or $\PGL(2,q)$ with $q\geq 5.$ Moreover if a prime $p$ divides $|C|$, then a Sylow $p$-subgroup of $G$ is either cyclic or generalized quaternion. Since a Sylow 2-subgroup of $H$ is neither cyclic or generalized quaternion, we deduce that if $2$ divides $C$, then a Sylow 2-subgroup of $G$ is generalized quaternion and the 2-part of $C$ has order 2.

	We now consider the two possible quotients separately.
	
	\medskip
	\noindent
	\textbf{Case 1: $H\cong\operatorname{PSL}(2,q)$.}

Let
$
	D=G'.
$
Since $H=G/C$ is perfect,
	$G=DC.
$
	Because $C$ is central,
$
	D=G'=(DC)'=D'$,
	so $D$ is perfect.  If
$
	Z_0=D\cap C,
$
	then
$
Z_0\leq Z(D)\cap D'
$
and
$
D/Z_0\cong H.
$
	Thus $D$ is a perfect stem central extension of $H$.
	
	The possible groups $D$ are controlled by the Schur multiplier
	of $H$.  The standard multiplier formula gives
	$
	M(\operatorname{PSL}(2,q))
	\cong C_{(2,q-1)}
	$
	apart from the exceptional cases
	$
	M(\operatorname{PSL}(2,4))\cong C_2,$
$M(\operatorname{PSL}(2,9))\cong C_6.
$ (see, for example, 
	\cite[Kapitel V, Satz 25.7]{Endliche-gruppen-I-Huppert}).
	
	Suppose first that $q$ is odd.  Apart from $q=9$, it follows
	that
$
	Z_0=1
$ or $
	Z_0\cong C_2,
$
	and consequently
$
	D\cong\operatorname{PSL}(2,q)$ or
$	D\cong\operatorname{SL}(2,q).
$
	
	When $q=9$, one must also exclude the $3$-part of the exceptional
	Schur multiplier.  Indeed,
$
	\operatorname{PSL}(2,9)\cong A_6
$
	has a Sylow $3$-subgroup isomorphic to
$
	C_3\times C_3, 
$ and therefore a Sylow 3-subgroup of $G$ cannot be cyclic.

	Finally, if $q\geq8$ is even, the Schur multiplier of
	$\operatorname{PSL}(2,q)$ is trivial.  Hence
$
	Z_0=1$
and 
$D\cong\operatorname{PSL}(2,q).
$

We have seen that the subgroup $Z_0$ is either trivial or has order $2$.
	Moreover, when $|Z_0|=2$, the $2$-part of $C$ is precisely
	$Z_0$.  Since $C$ is cyclic, it follows that $Z_0$ has a
	complement $X$ in $C$.
	Now
$
	G=DC=DX, 
	D\cap X=1,
$ and
	$[D,X]=1.
$
	Therefore
	$G\cong D\times X.
$

If $|D|$ and $|X|$ had a common prime divisor $r$, the Sylow $r$-subgroup of $G$ could be neither cyclic nor generalizeq quaternion. 	Hence $
\gcd(|D|,|X|)=1.
$

	Taking $Y=D$ proves all the assertions when the quotient
	is $\operatorname{PSL}(2,q)$.
	
	\medskip
	\noindent
	\textbf{Case 2: $H\cong\operatorname{PGL}_2(q)$.}
	
	If $q$ is even, then $
	\operatorname{PGL}(2,q)
	=\operatorname{PSL}(2,q)
$.  We may therefore
	assume that $q$ is odd.  Put
$H=\operatorname{PGL}(2,q),$
$S=H'=\operatorname{PSL}_2(q).
$
	Then
$
	H/S \cong C_2.
$
	
	Let $X$ be the Hall $2'$-subgroup of $C$.  We claim that $X$
	splits off as a direct factor of $G$.
	
	For completeness, recall the universal coefficient sequence
	for central extensions with trivial action:
	\[
	0\longrightarrow
	\operatorname{Ext}^{1}(H_{\mathrm{ab}},X)
	\longrightarrow
	H^2(H,X)
	\longrightarrow
	\operatorname{Hom}(M(H),X)
	\longrightarrow0.
	\tag{1}
	\]
	Here
$H_{\mathrm{ab}}\cong C_2,
$
	and the Schur multiplier of $\operatorname{PGL}(2,q)$ is a
	$2$-group.  Since $X$ has odd order, both the left-hand and the
	right-hand terms in (1) vanish.  Hence the odd part of the
	central extension splits.  Since $X$ is central, the splitting
	is direct.
	
Hence
$
	G\cong X\times Y,
$
	where $X$ is the odd-order part of $C$, the group $X$ is cyclic,
	and $
	Y/B\cong\operatorname{PGL}(2,q)
$
	for some
$	B\leq Z(Y)$ with $|B|\leq 2.$

As in Case~1, if an odd prime $r$ divided both $|X|$ and
$|\operatorname{PGL}(2,q)|$, a Sylow $r$-subgroup of $G$ would not be cyclic.
Hence $\gcd(|X|,|Y|)=1.$

	If $B=1$, then
$
	Y\cong\operatorname{PGL}(2,q),
$
	and there is nothing more to prove.
	
	Suppose now that $
	B\cong C_2.$ 
	A Sylow $2$-subgroup of $Y$ cannot be cyclic, since its quotient
	contains a dihedral Sylow $2$-subgroup of
	$\operatorname{PGL}(2,q)$.  Hence a Sylow $2$-subgroup of $Y$
	must be generalized quaternion.
	
	Let $N$ be the full inverse image in $Y$ of
$
S=\operatorname{PSL}(2,q).
$
	Then
$
	N/B\cong S.
$
	The extension
	\[
	1\longrightarrow B
	\longrightarrow N
	\longrightarrow S
	\longrightarrow1
	\]
	cannot split.  Indeed, if it split, then
$
	N\cong C_2\times S
$
and a Sylow $2$-subgroup of $N$ would then contain a Klein
	four-group.  This is impossible because every subgroup of a
	cyclic or generalized quaternion $2$-group is cyclic or
	generalized quaternion, and in particular contains no subgroup
	isomorphic to $C_2\times C_2$.
	It follows that
$
	N\cong\operatorname{SL}_2(q).
$
	In particular,
$
	B=Z(N)\leq N'=N\leq Y',
$
	so the extension of $\operatorname{PGL}(2,q)$ by $B$ is a stem
	extension.
	
	For every odd $q$, there are exactly two non-isomorphic
	non-split stem central extensions of
	$\operatorname{PGL}(2,q)$ by a group of order $2$.  One has
	semidihedral Sylow $2$-subgroups, whereas the other has
	generalized quaternion Sylow $2$-subgroups (see
	\cite[Result~2.21]{GiuliettiKorchmaros}).  The latter is denoted
	here by  $\operatorname{SL}^{(2)}(2,q).$

	Our hypothesis excludes the extension with semidihedral Sylow
	$2$-subgroups.  Therefore
$
	Y\cong \operatorname{SL}^{(2)}(2,q).$
	\end{proof}

\section*{Acknowledgements} 
SB gratefully acknowledges the financial support provided by the NBHM research project (Reference No.~02011/29/2025NBHM(RP)/R\&DII/11951). He sincerely thanks the National Board for Higher Mathematics (NBHM), India, for funding this research. SB also acknowledges the excellent research environment and facilities provided by Dhirubhai Ambani University, Gandhinagar, Gujarat. 

\subsection*{Declaration of competing interest}
The authors have no relevant financial or non-financial interests to disclose.

\subsection*{Data availability}
Data sharing is not applicable to this article as no datasets were generated or analyzed 
during the present study.

\bibliographystyle{amsplain}
\bibliography{gen-inv-lcpb.bib}

\end{document}